\definecolor{red}{rgb}{1,0,0}
\newcommand{\vvirg}{ , \dots , }
\newcommand{\calF}{\mathcal{F}}
\newcommand{\calH}{\mathcal{H}}
\newcommand{\calQ}{\mathcal{Q}}
\newcommand{\calS}{\mathcal{S}}
\newcommand{\scrC}{\mathscr{C}}
\newcommand{\scrF}{\mathscr{F}}
\newcommand{\bbC}{\mathbb{C}}
\newcommand{\bbP}{\mathbb{P}}
\newcommand{\bbX}{\mathbb{X}}
\newcommand{\bbZ}{\mathbb{Z}}
\newcommand{\frakm}{\mathfrak{m}}
\renewcommand{\phi}{\varphi}
\newcommand{\eps}{\varepsilon}
\newcommand{\dashto}{\dashrightarrow}
\renewcommand{\tilde}[1]{\widetilde{#1}}
\renewcommand{\bar}[1]{\overline{#1}}
\newcommand{\rank}{\mathrm{rank}}
\DeclareMathOperator{\codim}{codim}
\DeclareMathOperator{\Sym}{Sym}
\newcommand{\sat}{\mathit{sat}}
\newcommand{\GL}{\mathrm{GL}}
\newcommand{\SL}{\mathrm{SL}}
\newcommand{\PGL}{\mathrm{PGL}}
\newcommand{\Macaulay}{\texttt{Macaulay2}}
\newcommand{\PP}{\mathbb{P}}
\newtheorem{theorem}{Theorem}[section]
\newtheorem{lemma}[theorem]{Lemma}
\newtheorem{proposition}[theorem]{Proposition}
\newtheorem{corollary}[theorem]{Corollary}
\theoremstyle{definition}
\newtheorem*{question*}{Question}
\newtheorem{example}[theorem]{Example}
\newcommand{\Str}{\mathrm{Str}}
\newcommand{\C}{\mathbb{C}}
\newcommand{\CC}{\mathbb{C}}
\newcommand{\cO}{\mathcal{O}}
\DeclareMathOperator{\Gr}{Gr}
\DeclareMathOperator{\HH}{H}
\title{Collineation varieties of tensors}
\author{Fulvio Gesmundo and Hanieh Keneshlou}
\address[F. Gesmundo]{Institut de Mathématiques de Toulouse; UMR5219 -- Université de Toulouse; CNRS -- UPS, F-31062 Toulouse Cedex 9, France}
\email{fgesmund@math.univ-toulouse.fr}
\address[H. Keneshlou]{Fachbereich Mathematik und Statistik, Universität Konstanz, Germany}
\email{hanieh.keneshlou@uni-konstanz.de}
\subjclass[2020]{14E05, 14M17, 14N07}
\keywords{tensor, collineation variety, orbit-closure}
\begin{document}

\begin{abstract}
In this article, we introduce the $k$-th collineation variety of a third order tensor. This is the closure of the image of the rational map of size $k$ minors of a matrix of linear forms associated to the tensor. We classify such varieties in the case of pencils of matrices, and nets of matrices of small size. We discuss the natural stratification of tensor spaces induced by the invariants and the geometric type of the collineation varieties. 
\end{abstract}

\maketitle
\section{Introduction}

A fundamental problem in mathematics is to determine conditions to distinguish different objects up to some natural equivalence relation. Of particular interest in several areas of geometry and applied sciences is the study of orbits and orbit-closures of linear spaces under a natural group action, varieties of tensors arising from such actions, and more generally varieties of tensors sharing interesting properties. The classical study of secant varieties \cite{Sylv:PrinciplesCalculusForms,Terr:seganti,LanMan:IdealSecantVarsSegre,LanWey:IdealsSingularitiesSecantVarietiesSegreVarieties}, the classification of small orbits for natural group actions \cite{Vinberg:ClassificationHomogNilpElements,LanMan:ProjectiveGeometryRatHomVars,ManMic:SecantsMinuscule}, classification results for finite-dimensional algebras \cite{BlaLys:DegenTensorsAlgebras,BES} all fit in this line of research. Important applications include the resource theory of entanglement in quantum physics \cite{BernCaru:AlgGeomToolsEntanglement,walter2016multipartite}, the characterization of complexity measures in theoretical computer science \cite{BuClSho:Alg_compl_theory,Lan:GeometryComplThBook}, the study of certain classes of probability models in algebraic statistics \cite{Lauritzen:GraphicalModels,AmFauSt:MomentVarsGaussian} and many others. 

A classical method to determine equations of varieties of tensors is by studying invariants of associated varieties. Here, with the term \emph{equation}, we simply mean a ``Zariski closed condition that is (reasonably) easy to check''. For instance, classical equations for varieties of tensors include rank conditions on their flattenings, and more generally \emph{unexpected} intersection properties of the image of a flattening with special subvarieties. In this work, we focus on tensors of order three and we study their \emph{collineation varieties}. We naturally identify a tensor of order three with a linear space of matrices, a classical point of view in algebraic geometry \cite{SatKim:ClassificationIrredPrehomVS,Man:PrehomogeneousSpaces,HuaLan:LinSpacesMatricesBounded}. The $k$-th collineation variety of a tensor is the closure of the image of the rational polynomial map sending a matrix of the corresponding linear space to its tuple of $k \times k$ minors. This is a restricted version of the variety of complete collineations, a bilinear analog of the variety of complete quadrics \cite{Tyr:CompleteQuadrics,Vain:CompleteCollineations}. We will provide a more precise description in \autoref{sec: preliminaries}. 

One important fact is that dimension and degree of the $k$-th collineation variety are lower semicontinuous invariant: more precisely, for every $n$, the set of tensors whose $k$-th collineation variety has dimension at most $n$ is closed; moreover, for every $\delta$, the set of tensors whose $k$-th collineation variety has dimension (exactly) $n$ and degree bounded by $\delta$ is a Zariski closed set in the locally closed set of tensors whose $k$-th collineation variety has dimension $n$. The study of the degrees of collineation varieties was proposed in \cite{ConMic:CharNumChromNumTensors} as a method to provide new equations on the space of tensors. A more general version of these degrees, called \emph{characteristic numbers} of the tensor in \cite{ConMic:CharNumChromNumTensors}, unifies several invariants from different areas such as chromatic numbers of graphs, Betti numbers of (complements of) hyperplane arrangements, characteristic numbers of quadrics in Schubert calculus and many others.

In this work, we propose the study of finer invariants of the collineation varieties, beyond their degrees. In fact, an important open question concerns \emph{how well} the $k$-th collineation variety can distinguish between different orbit-closures of tensors, under the action of the product of general linear groups acting on the tensor factors. We answer this problem in a restricted range.
\begin{itemize}
    \item For tensors in $\bbC^2 \otimes \bbC^m \otimes \bbC^n$, that is pencils of matrices, we prove that the collineation varieties are always rational normal curves, and their degree is controlled by the intersection of the pencil with the variety of matrices of bounded rank. See \autoref{thm: pencils general}.
    \item For tensors in $\bbC^3 \otimes \bbC^m \otimes \bbC^n$ with $m=2$ or $(m,n) = (3,3)$, we give a full classification of all possible collineation varieties. See \autoref{thm: small nets}, \autoref{thm: nets smooth}, and \autoref{thm: netC3}. We observe that they are always varieties of minimal degree. 
    \item We introduce a stratification of the space of tensors based on the collineation varieties. We realize some known interesting varieties of tensors, related to the notion of tensor rank and tensor subrank, in terms of these strata. See \autoref{thm: sigma3 as F1}, and \autoref{prop: subrank components}.
\end{itemize}
The matrix pencils and the small nets that we consider are the only spaces of tensors of order three with finite or tame orbit structure, in the sense of Gabriel's Theorem \cite{Gabriel-quivers,Kac:InfiniteRootSystemsRepGraphsInvariantTheory}: in particular, there are strong classification results providing \emph{normal forms} for tensors in this range. This gives us access to the entire space of tensors with only a small number of parameters and allows us to provide a full classification of the collineation varieties of all tensors.

A similar characterization, limited to the study of the degree, was provided in \cite{FMS} for the case of pencils of symmetric matrices, namely partially symmetric tensors in $\bbC^2 \otimes \bbC^n \otimes \bbC^n$. Moreover, applications in the theory of maximum likelihood motivated the study of the \emph{top} collineation variety of partially symmetric tensors, that is the image of the (restriction to a linear space of the) map sending a matrix to its inverse \cite{JiKoWi:LinSpacesSymmetricMatrices,DyKoRySi:MaxLikelihoodNetsConics}.

\subsection*{Acknowledgement}
HK is funded by the Deutsche Forschungsgemeinschaft — Projektnummer 467575307. We thank Jarek Buczy\'nski and Mateusz Micha{\l}ek for helpful discussions on the topics of this work.

\section{Preliminaries}\label{sec: preliminaries}

Let $V_1,V_2,V_3$ be complex finite-dimensional vector spaces, with $n_i = \dim V_i$. We study tensors in $V_1 \otimes V_2 \otimes V_3$ up to the natural action of the product of general linear groups $\GL(V_1) \times \GL(V_2) \times \GL(V_3)$.

A tensor $T \in V_1 \otimes V_2 \otimes V_3$ defines three natural linear maps 
\[
V_1^* \to V_2 \otimes V_3 \qquad  V_2^* \to V_1 \otimes V_3 \qquad  V_3^* \to V_1 \otimes V_2 ,
\]
via tensor contraction: these are the flattenings of $T$. We say that a tensor is concise if the three flattening maps are injective. Notice that, up to the action of $\GL(V_1)$, a tensor $T \in V_1 \otimes V_2 \otimes V_3$ is uniquely determined by the image $T(V_1^*)$ of its first flattening: in other words, if $T,T'$ are two tensors such that the $T(V_1^*) = T'(V_1^*)$ as linear subspaces of $V_2 \otimes V_3$, then there exists $g_1 \in \GL(V_1)$ such that $g_1 T = T'$. A similar statement is valid on the other factors.

The image of the first flattening map is a linear subspace of $V_2 \otimes V_3$; in coordinates, this can be regarded as a $n_2 \times n_3$ matrix whose entries are linear forms in (at most) $n_1$ variables. The $(k \times k)$-minors of this matrix are a collection of $\binom{n_2}{k} \binom{n_3}{k}$ polynomials in (at most) $n_1$-variables. The \emph{$k$-th collineation variety} of $T$ (on its first factor), denoted by $\scrC^1_k(T)$, is the variety in $\bbP (\bbC^{\binom{n_2}{k} \binom{n_3}{k}})$ parameterized by this collection of polynomials. Similarly, one can define collineation varieties on the second factor $\scrC^2_k(T)$ or the third factor $\scrC^3_k(T)$, considering the other flattenings of the tensor.

The definition can be given invariantly as follows. Every linear map $f: V \to W$ induces a map on symmetric powers $f^{\cdot k} : S^k V \to S^k W$. Moreover, a linear map $F : S^d V \to U$ can be regarded as a polynomial map $F: V \to U$ defined by homogeneous polynomials of degree $d$. The symmetric power $S^k (V_2 \otimes V_3)$ contains a canonical copy of the $(\GL(V_2) \times \GL(V_3))$-representation $\Lambda^k V_2 \otimes \Lambda^k V_3$. Therefore, there is a canonical equivariant projection $S^k (V_2 \otimes V_3) \to \Lambda^k V_2 \otimes \Lambda^k V_3$. 

Given a tensor $T \in V_1 \otimes V_2 \otimes V_3$, let $\mu^{1}_{k,T}$ be the composition of the $k$-th symmetric power of the flattening map $T: V_1^* \to V_2 \otimes V_3$ followed by the equivariant projection onto $\Lambda^k V_2 \otimes \Lambda^k V_3$, regarded as a polynomial map $\mu^{1}_{k,T}: V_1^* \to \Lambda^k V_2 \otimes \Lambda^k V_3$. Its projectivization gives a rational map
\[
\mu^{1}_{k,T} :\PP(V_1^*) \dashto \PP(\Lambda^k V_2 \otimes \Lambda^k V_3).
\]
The closure of the image of $\mu^{1}_{k,T}$ defines the subvariety  $\scrC^1_{k}(T)\subset \bbP (\Lambda^k V_2 \otimes \Lambda^k V_3)$. Similarly, one can define analogous varieties corresponding to the other factors. We point out that $\scrC^1_{k}(T)$ is well defined if the image $T(V_1^*)$ of the first flattening of $T$ contains matrices of rank at least $k$. In fact, consider the classical determinantal variety
\[
 \sigma_k^{V_2 \otimes V_3} = \{ A \in \bbP (V_2 \otimes V_3)  : \rank(A) \leq k\} \subseteq \bbP (V_2 \otimes V_3)
\]
where $\rank(-)$ denotes the rank as a linear map $V_2 ^* \to V_3$. Then $\scrC^1_{k}(T)$ is well-defined if $\PP(T(V_1^*)) \not\subseteq \sigma_{k-1}^{V_2 \otimes V_3}$. The \textit{$k$-th base locus scheme} of $T$ is
\[
B^1_k(T):=\bbP (T(V_1^*)) \cap \sigma_{k-1}^{V_2 \otimes V_3}
\]
and it contributes directly to the geometry of $\scrC^1_k(T)$. We provide some details on this in \autoref{subsec: linear series and projections}

If $n_2 = n_3 = k+1$, then $ \scrC^1_{k}(T)$ is the so-called \emph{reciprocal variety} studied in \cite{DyKoRySi:MaxLikelihoodNetsConics,JiKoWi:LinSpacesSymmetricMatrices} in the symmetric setting. An alternative definition of the collineation variety can be given via projection from the \emph{variety of complete collineations}. We refer to \cite{ConMic:CharNumChromNumTensors,MMMSV:CompleteQuadrics} for details on this point of view. 

In the following, we will always assume our tensor of study is concise. This is not restrictive: if a tensor $T \in V_1 \otimes V_2 \otimes V_3$ is not concise, then there exist subspaces $V_i' \subseteq V_i$ with at least one strict inclusion such that $T \in V_1' \otimes V_2' \otimes V_3'$. If a tensor is concise, then the flattening map $T : V_1^* \to V_2 \otimes V_3$ is injective and defines an isomorphism between $T(V_1^*)$ and $V_1^*$. In the following, we will identify these two spaces. 

\subsection{Projections of Veronese varieties}\label{subsec: linear series and projections}

We provide a geometric description of the $k$-th collineation variety of a tensor $T \in V_1 \otimes V_2 \otimes V_3$ as a projection of the $k$-th Veronese embedding of $\bbP V_1^*$.

We consider the following general framework. Let $L \subseteq S^k V_1$ be a linear space of homogeneous polynomials of degree $k$. Let $B \subseteq \bbP V_1^*$ be its base locus and let $I(B) \subseteq \bbC[V_1^*] \simeq \Sym (V_1^*)$ be the ideal of $B$. We have $L \subseteq I(B)_k$. In fact, $L$ cuts out $B$ scheme-theoretically, in the sense that $I(B)$ coincides with the saturation of the ideal generated by $L$; as a consequence $I(B)_k$ cuts out $B$ scheme-theoretically as well.

From a dual point of view, the projectivization $\bbP I(B)^\perp_k \subseteq \bbP S^k V_1^*$ coincides with the space $\langle \nu_k(B) \rangle$ \cite[Proposition 4.4.1.1]{Lan:TensorBook}; moreover, $\langle \nu_k(B) \rangle = \bbP I_k(B)^\perp \subseteq \bbP L^\perp$. Since $L$ cuts out $B$ scheme-theoretically, one has $\bbP L^\perp \cap \nu_k(\bbP V_1^*) =\nu_k( B)$: this follows from an argument similar to \cite[Corollary 2.5]{GesKayTel:ChoppedIdeals}.

Now, the linear space $L$ defines a rational map $\phi_L : \bbP V_1^* \dashto \bbP L^*$ by sending a point $p$ to the hyperplane in $L$ defined by $\{ f \in L : f(p) = 0\}$. In coordinates, if $f_0 \vvirg f_m$ is a basis of $L$, this can be interpreted as the evaluation $\phi_L(p) = (f_0(p): \cdots : f_m(p)) \in \bbP^m$. 

The natural identification $L^* \simeq S^k V_1^* / L^\perp$ gives the following commutative diagram,
\begin{equation}\label{eqn: diagram veronese projection}
\xymatrixcolsep{3pc}
\xymatrix{
& \bbP S^k V_1^* \ar@{-->}[d]^{\pi} \\
    \bbP V_1^*  \ar[ur]^{\nu_k} \ar@{-->}[r]_{\phi_L} & \bbP L^*
}
\end{equation}
where $\pi : \bbP S^k V_1^* \dashto \bbP L^*$ is the projection induced by the linear projection $S^k V_1^* \mapsto S^k V_1^* / L^\perp$. The image $X_L = \bar{\phi_L ( \bbP V_1^*)}$ is therefore the projection of $\nu_k(\bbP V_1^*)$ from the linear space $\bbP L^\perp$; if $L = I(B)_k$, this is the same as the projection from $\langle \nu_k(B) \rangle$. 

We may apply this construction to the setting of the $k$-th collineation variety. In this case, $L$ is the linear span of the size $k$ minors of $T(V_1^*)$, regarded as polynomials on $\bbP V_1^*$. Therefore, $\scrC^1_k(T)$ is the projection of $\nu_k(\bbP V_1^*)$ and if $L$ coincides with the degree $k$ homogeneous component of $I(B_k^1(T))$, then the center of the projection coincides with $\langle \nu_k( B_k^1(T)) \rangle$. We can enhance the diagram in \eqref{eqn: diagram veronese projection} as follows
\[
\xymatrixcolsep{3pc}\xymatrix{
& \bbP S^k V_1^* \ar@{-->}[d]^{\pi} \ar[r]^{T^{\cdot k}} & \bbP S^k (V_2 \otimes V_3) \ar@{-->}[dd]^{\pi_{D_k}} \\
    \bbP V_1^*  \ar[dr]_{T} \ar[ur]^{\nu_k} \ar@{-->}[r]_{\phi_L} & \bbP L^* \ar[dr]^{\iota}& \\
                   & \bbP(V_2 \otimes V_3) \ar@{-->}[r]_{\phi_{D_k}} \ar[uur]
|!{[u];[r]}\hole^{\nu_k} & \bbP (\Lambda^k V_2 \otimes \Lambda^k V_3)
}
\]
Here $D_k$ denotes the degree $k$ component of the ideal of size $k$ minors on $V_2 \otimes V_3$, which is a subspace of $S^k (V_2 \otimes V_3)^*$. The map $\pi_{D_k}$ is the linear projection with center $D_k^\perp \subseteq S^k (V_1 \otimes V_2)$. The map $\iota$ is the linear embedding induced by $\pi_{D_k} \circ T^{\cdot k}$. The map $\mu^{1}_{k,T}$ then coincides with any of the compositions from $\bbP V_1^*$ to $\bbP (\Lambda^k V_2 \otimes \Lambda^k V_3)$.

\section{Pencils}\label{sec: pencils}
We note that if $\dim V_1 = 1$, a well-defined collineation variety is a single point. In this section, we start with the first interesting case of study, the case of pencils of matrices, that is $\dim V_1 = 2$.

The main result of this section is the following:
\begin{theorem}\label{thm: pencils general}
Let $T \in V_1 \otimes V_2 \otimes V_3$ be a tensor. Let $n_i = \dim V_i$, with $n_1 = 2$ and $n_2 \leq n_3$ and let $1\leq k\leq n_2$ with the second inequality strict if $n_2=n_3$. Suppose $B_k^1(T)$ is $0$-dimensional and $p=\deg(B_k^1(T))$. If $p < k$, then $\scrC^1_k(T)$ is the rational normal curve of degree $k-p$; if $p = k$ then $\scrC^1_k(T)$ is a point. 
\end{theorem}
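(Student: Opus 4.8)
The plan is to run the statement through the projection-of-Veronese description of \autoref{subsec: linear series and projections} and reduce everything to one claim about the span of the minors. Write $M(s,t) = sM_1 + tM_2$ for the pencil $T(V_1^*)$, so that $\mu^1_{k,T}$ sends $[s:t] \in \bbP V_1^* = \bbP^1$ to the tuple of $k \times k$ minors of $M(s,t)$, each a binary form of degree $k$. Let $L \subseteq S^k V_1^*$ be their linear span and let $B = B^1_k(T)$ be the base locus, a length-$p$ divisor on $\bbP^1$ cut out by a form $g$ of degree $p$. By the framework, $\scrC^1_k(T) = X_L$ is the projection of $\nu_k(\bbP^1)$, the rational normal curve of degree $k$, from the center $\bbP L^\perp$. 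The heart of the theorem is the claim $L = I(B)_k$, that is $L = g \cdot S^{k-p} V_1^*$. Granting this, a basis of $L$ has the form $g h_0, \dots, g h_{k-p}$ with $h_i$ a basis of $S^{k-p}V_1^*$, so after removing the common factor $g$ the map $\phi_L$ is given by the complete linear system $S^{k-p}V_1^*$; hence $X_L = \nu_{k-p}(\bbP^1)$ is the rational normal curve of degree $k-p$. Equivalently, the center of projection is exactly $\langle \nu_k(B)\rangle = \bbP^{p-1}$, and projecting a degree-$k$ rational normal curve from the span of a length-$p$ subscheme of itself yields a rational normal curve of degree $k-p$; this is classical, and valid because any subscheme of length $\leq k+1$ of $\bbP^1$ imposes independent conditions on $|\cO_{\bbP^1}(k)|$.

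To prove $L = I(B)_k$ I would first pass to Kronecker canonical form. The action of $\GL(V_2) \times \GL(V_3)$ on $M$ induces a linear isomorphism of $\bbP(\Lambda^k V_2 \otimes \Lambda^k V_3)$ carrying $\scrC^1_k(T)$ to the collineation variety of the transformed tensor, while $\GL(V_1)$ only reparametrizes $\bbP^1$; both operations preserve the geometric type ``rational normal curve of degree $k-p$'' and the integer $p = \deg B$. I may therefore assume $M = \bigoplus_i B_i$ is a direct sum of rectangular blocks $L_\epsilon$, $L_\eta^{\mathsf T}$ and Jordan blocks attached to points $\lambda \in \bbP^1$. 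A $k \times k$ minor of a block-diagonal matrix vanishes unless it is block-compatible, in which case it factors as a product of square minors of the individual blocks; hence $L = \sum_{\sum_i j_i = k} \bigodot_i L_{j_i}(B_i)$, where $L_j(B_i)$ is the span of the $j \times j$ minors of $B_i$ and $\bigodot$ denotes the span of products. A direct computation then shows that each rectangular block contributes the complete system $L_j(B_i) = S^j V_1^*$ for every $0 \leq j \leq \mu_i$, whereas a Jordan block of size $d$ at $\lambda$, with linear form $\ell_\lambda$, contributes $L_j = S^j V_1^*$ for $j < d$ and $L_d = \langle \ell_\lambda^d \rangle$.

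Consequently every summand of $L$ has the shape $\bigl(\prod_\lambda \ell_\lambda^{a_\lambda}\bigr) S^{k - \sum_\lambda a_\lambda} V_1^*$, i.e. it is the degree-$k$ part $I(D_S)_k$ of the ideal of an effective divisor $D_S$ of degree $\leq k$ supported on the eigenvalue points, and $L = \sum_S I(D_S)_k$ with $B = \bigwedge_S D_S$ the pointwise minimum. What remains is the numerical identity $\dim L = k+1-p$, equivalently $\sum_S I(D_S)_k = I(\bigwedge_S D_S)_k$. The inclusion $\subseteq$ is immediate. For $\supseteq$ I would pass to the dual side, where $L^\perp = \bigcap_S \langle \nu_k(D_S)\rangle$ inside $\bbP S^k V_1^*$, and use that on $\nu_k(\bbP^1)$ the span of an effective divisor of degree $\leq k$ is lattice-theoretic in the divisor, intersections of spans corresponding to pointwise minima, so that $L^\perp = \langle \nu_k(B)\rangle$ has dimension $p-1$.

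The main obstacle is exactly this last step. The identity $\sum_S I(D_S)_k = I(\bigwedge_S D_S)_k$ is false for arbitrary divisors, already $\langle s^k\rangle + \langle t^k\rangle \neq S^k$ for $k \geq 2$, so one cannot invoke any formal ``span of sums equals ideal of the gcd''. The argument must exploit the feasibility constraints built into the Kronecker data: each $D_S$ has degree $\leq k$, and whenever two points of $\bbP^1$ could each absorb the entire budget $k$ the block sizes force enough residual free capacity to also realize lower-vanishing distributions, ruling out the bad configuration above. Combining this optimization over feasible block-distributions, and in particular controlling the case of several Jordan blocks sitting over a single point of $\bbP^1$, with the independence of short subschemes on the degree-$k$ Veronese is the technical core; once $\dim L = k+1-p$ is established, the theorem follows from the projection description.
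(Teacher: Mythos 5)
Your framework is sound and in fact coincides with the paper's own strategy: reduce to Kronecker normal form, expand the $k\times k$ minors of a block-diagonal pencil as products of minors of the individual blocks, record the contribution of each irreducible block (the complete system $S^jV_1^*$ for the singular blocks $L_h$, $R_h$ and for Jordan blocks below their size; $\langle \ell_\lambda^{d}\rangle$ for a full-size Jordan minor), and reduce the theorem to the single claim $L=I(B)_k$, equivalently $\dim L = k+1-p$. This claim is precisely the content of \autoref{lemma: binary determinantal are saturated} (the degree-$k$ part of the ideal of $k\times k$ minors is saturated), and your final step --- that $L = g\cdot S^{k-p}V_1^*$ forces the image to be $\nu_{k-p}(\bbP^1)$ --- is exactly how the paper concludes from that lemma.

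The problem is that you never prove the claim. You correctly observe that the formal identity $\sum_S I(D_S)_k = I\bigl(\bigwedge_S D_S\bigr)_k$ fails for arbitrary divisors (already $\langle x_0^k,x_1^k\rangle \neq S^k$ for $k\geq 2$), so that any proof must use the specific structure of the divisors produced by the Kronecker data --- and then you stop, declaring this ``optimization over feasible block-distributions'' to be the technical core without carrying it out. That core is the entire difficulty of the theorem. The paper resolves it in \autoref{lemma: binary determinantal are saturated} by induction on the number of blocks with a strengthened hypothesis: Jordan blocks are ordered last, so that for the partial sum $N$ of the first $s-1$ blocks one knows either $I(N,q)_q \in \{0, \bbC[x_0,x_1]_q\}$ (no Jordan blocks) or, in general, $I(N,q)_q = f_q\cdot\bbC[x_0,x_1]_{q-d_q}$ with the crucial divisibility $f_q\mid f_{q+1}$. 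When the last block is $J_h(\lambda)$, this divisibility collapses the convolution $\sum_q I(N,q)_q\cdot I(J_h(\lambda),r-q)_{r-q}$ to at most two terms, namely $f_{\bar{q}}\cdot\bbC[x_0,x_1]_{r-d_{\bar{q}}} + f_{r-h}\,(x_0+\lambda x_1)^h\cdot\bbC[x_0,x_1]_{r-h-d_{r-h}}$, and since $f_{r-h}$ divides $f_{\bar{q}}$ a Euclidean/gcd computation shows this sum equals $g\cdot\bbC[x_0,x_1]_{r-\deg g}$ with $g=\gcd\bigl(f_{\bar{q}},\, f_{r-h}(x_0+\lambda x_1)^h\bigr)$ --- exactly the principal, saturated shape needed, ruling out configurations like $\langle x_0^k,x_1^k\rangle$. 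Without this induction (or an equivalent argument), your proposal establishes only the easy inclusion $L\subseteq I(B)_k$ and the reduction; the dual ``lattice-theoretic'' argument you sketch (intersections of spans on the Veronese corresponding to pointwise minima of divisors) is, as you yourself point out, false as stated, so the theorem remains unproved.
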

We point out that if the assumption that $B_k^1(T)$ is $0$-dimensional in \autoref{thm: pencils general} is not satisfied, then $B_k^1(T) = \bbP T(V_1^*)$ and the collineation variety is not even defined. The proof of \autoref{thm: pencils general} is built on the technical \autoref{lemma: binary determinantal are saturated} whose proof uses Kronecker's classification of matrix pencils \cite[Ch. XIII]{Gant:TheoryOfMatrices}. Indeed, matrix pencils, or equivalently tensors in $V_1 \otimes V_2 \otimes V_3$ with $\dim V_1 = 2$, have \emph{normal forms} with respect to the action of $\GL(V_1) \times \GL(V_2) \times \GL(V_3)$. Up to the action of $\GL(V_1)$, a tensor is uniquely determined by  the linear space $T(V_1^*)$, which may be regarded as an $n_2 \times n_3$ matrix whose entries are binary forms in two variables $x_0,x_1$. The Kronecker classification of matrix pencils guarantees that $T$ can be normalized so that $T(V_1^*)$ is represented by a block diagonal matrix, and the diagonal blocks fall into three classes of \emph{irreducible pencils}:
\begin{align*}
L_h &=  \left( \begin{array}{ccccc}
             x_0 & x_1 & & & \\
              & x_0 & x_1 & & \\
             & & \ddots & \ddots &  \\
             & & &  x_0 & x_1
            \end{array}
\right)   \in \bbC^2 \otimes \bbC^h \otimes \bbC^{h+1}; \\ 
R_h &=   \left( \begin{array}{cccc}
             x_0 &  &  \\
              x_1 & x_0 &   & \\
                & x_1 & \ddots  & \\
              & & \ddots & x_0  \\
              & &   & x_1
            \end{array}
\right)   \in \bbC^2 \otimes \bbC^{h+1} \otimes \bbC^{h}; \\
J_h(\lambda) &=   \left( \begin{array}{cccc}
             x_0  + \lambda x_1 & x_1 &  \\
               & \ddots & \ddots  & \\
              & & \ddots & x_1  \\
              & &   & x_0 +\lambda x_1
            \end{array}
\right)   \in \bbC^2 \otimes \bbC^h \otimes \bbC^{h}, \lambda\in \bbC .
\end{align*}
The pencils $L_h,R_h$ are called, respectively, the left and right singular pencils. The pencil $J_h(\lambda)$ is called the Jordan pencil with eigenvalue $\lambda$. Note that for $h = 1$, $J_1(\lambda) = (x_0 + \lambda x_1)$ is a linear form.

\autoref{lemma: binary determinantal are saturated} shows that the $r$-th determinantal ideal of a matrix pencil coincides with its saturation in degree $r$. 
\begin{lemma}\label{lemma: binary determinantal are saturated}
 Let $M=M(x_0,x_1)$ be an $n_2 \times n_3$ matrix of linear forms with entries in $\bbC[x_0,x_1]$. Let $I(M,r) \subseteq \bbC[x_0,x_1]$ be the ideal generated by the $r \times r$ minors of $M$. Then
 \[
  I(M,r)^{\sat}_r = I(M,r)_r.
 \]
\end{lemma}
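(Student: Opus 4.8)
The plan is to establish the nontrivial inclusion $I(M,r)^{\sat}_r \subseteq I(M,r)_r$, the reverse being immediate. Write $J = I(M,r)$ and let $g$ be the $\gcd$ of the $r\times r$ minors, of degree $d$. Since $\bbC[x_0,x_1]$ is a UFD, factoring each minor as $g\cdot h_i$ with $\gcd(h_i)=1$ shows that the $h_i$ have no common zero in $\bbP^1$, so $(h_1,\dots)$ contains a power $\mathfrak{m}^N$ of the irrelevant ideal; hence $g\,\mathfrak{m}^N \subseteq J \subseteq (g)$, and as principal ideals of forms are saturated in $\bbC[x_0,x_1]$ this forces $J^{\sat} = (g)$ and $J^{\sat}_r = (g)_r = g\cdot S^{r-d}$. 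The lemma is therefore equivalent to the spanning statement: \emph{the $r\times r$ minors span the full space $g\cdot S^{r-d}$ of degree-$r$ forms divisible by $g$.}

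First I would reduce to Kronecker normal form. The span of the $r\times r$ minors, the gcd $g$, and the saturation are all unaffected when $M$ is replaced by $PMQ$ with $P,Q$ constant invertible (minors transform by an invertible linear change) or when $x_0,x_1$ are changed linearly (a graded automorphism compatible with saturation and degree). By the Kronecker classification I may thus assume $M$ is block diagonal with blocks $L_h$, $R_h$, $J_h(\lambda)$, all eigenvalues being finite after a generic change of coordinates. For a block diagonal matrix every nonzero $r\times r$ minor is a product $\prod_j D_j$ of an $(s_j\times s_j)$-minor $D_j$ of the $j$-th block, with $\sum_j s_j = r$ and each $s_j$ bounded by the size of its block; so $J_r$ is spanned by such products.

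Next I would record the span of the minors of a single block. For a singular block $L_h$ or $R_h$ and any $0\le s\le h$, the $s\times s$ minors span all of $S^s V_1$; for a Jordan block $J_h(\lambda)$ with $\ell=x_0+\lambda x_1$, the submaximal minors ($s\le h-1$) again span all of $S^s V_1$, since one finds among them every monomial $\ell^a x_1^{s-a}$ and $\ell,x_1$ are independent, while the unique maximal minor is $\ell^h$. The crucial qualitative point is that every block except a \emph{saturated} Jordan block (one used at its full size) contributes a full symmetric power — all directions, not merely powers of $\ell$. Consequently $J_r = \sum_{(s)} \big(\prod_i \ell_i^{f_i(s)}\big)\, S^{\,r-\sigma(s)}$, summed over admissible size vectors $(s)$, where $f_i(s)$ is the total size of the saturated Jordan blocks with eigenvalue $\lambda_i$ and $\sigma(s)=\sum_i f_i(s)$. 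Each summand is exactly the set of degree-$r$ forms $F$ with $\ord_{\lambda_i}F\ge f_i(s)$ for all $i$, and $g=\prod_i \ell_i^{e_i}$ with $e_i=\min_{(s)} f_i(s)$.

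The main obstacle is the final spanning: after dividing out $g$, I must show the ideal $\mathfrak{a}$ generated by the forms $\prod_i \ell_i^{f_i(s)-e_i}$ satisfies $\mathfrak{a}_{\,r-d}=S^{r-d}$. The subtlety — and the reason a single size vector does not suffice — is that the minimal exponent $e_i$ at one eigenvalue need not be attainable simultaneously with minimality at the others; already the matrix $J_3(\lambda_1)\oplus J_1(\lambda_2)\oplus J_1(\lambda_2)$ at $r=4$ has $g=\ell_2$ realized only by combining two distinct products. By definition of $e_i$, however, for each $i$ some size vector makes the $\ell_i$-exponent vanish, so $\mathfrak{a}$ has no base point on $\bbP^1$; hence $\mathfrak{a}$ is $\mathfrak{m}$-primary and $\mathfrak{a}_e=S^e$ for $e\gg 0$. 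What remains, and is the heart of the argument, is the sharp degree estimate that equality already holds at $e=r-d$. I would prove this by induction on the number of blocks, peeling off one block and using that the freed submaximal minors contribute a full symmetric power supplying precisely the ``cross terms'' needed to fill the missing directions, or equivalently by an apolarity computation bounding the regularity of $\mathfrak{a}$ by $r-d$. This degree bookkeeping, rather than any single conceptual step, is where the real work lies.
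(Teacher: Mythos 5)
Your reductions are correct and follow the same skeleton as the paper's proof: Kronecker normal form, the product structure of minors of a block-diagonal pencil, the span of the minors of each irreducible block, and the identification $I(M,r)^{\sat}=(g)$ with $g$ the gcd of the $r\times r$ minors. Your reformulation — that the lemma is equivalent to the ideal $\fraka$ generated by the normalized products $\prod_i \ell_i^{f_i(s)-e_i}$ being full in degree exactly $r-\deg g$ — is also correct, and your example $J_3(\lambda_1)\oplus J_1(\lambda_2)\oplus J_1(\lambda_2)$ correctly isolates the difficulty. But the proof stops precisely there: after your reductions, the ``sharp degree estimate'' is not an auxiliary point, it \emph{is} the lemma, and neither of your two suggested routes is carried out. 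The soft route cannot be made to work as stated: $\frakm$-primariness of $\fraka$ only gives fullness in large degree, and your own example shows the needed degree is attained sharply (the ideal $(\ell_1^3,\ell_2)$ becomes full exactly in degree $3=r-\deg g$, not before), so no generic regularity bound for $\frakm$-primary ideals can close the argument; the specific combinatorics of admissible size vectors must enter.

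The induction route is exactly the paper's strategy, and what is missing from your sketch is the structural input that makes that induction run. The paper strengthens the inductive hypothesis: writing $N$ for the pencil with the last block removed, it records that $I(N,p)_p=f_p\cdot\bbC[x_0,x_1]_{p-d_p}$ for \emph{every} $p\le r$ (not just $p=r$), together with the divisibility $f_p\mid f_{p+1}$ (each $(p+1)$-minor lies in the ideal of $p$-minors by Laplace expansion, hence is divisible by their gcd) and the elementary bound $d_p\le p$. Peeling off a Jordan block $J_h(\lambda)$ one gets $I(M,r)_r=\sum_q I(N,q)_q\cdot I(J_h(\lambda),r-q)_{r-q}$, and the divisibility collapses all summands with unsaturated Jordan factor into a single one — note the direction: smaller minor size means smaller gcd, hence a \emph{larger} principal piece, so the collapse is onto the smallest admissible $q_0=\max(0,r-h+1)$, a subtlety one only sees by doing the bookkeeping. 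One is left with a two-term sum $f_{q_0}\cdot\bbC[x_0,x_1]_{r-d_{q_0}}+f_{r-h}\,\ell^h\cdot\bbC[x_0,x_1]_{r-h-d_{r-h}}$, and the bound $d_{q_0}\le q_0=r-h+1$ is precisely what verifies the inequality $r\ge \deg A+\deg B-\deg\gcd(A,B)-1$ under which two principal pieces of degree-$r$ forms sum to the gcd's piece (the complete-intersection fact for coprime binary forms that you invoke implicitly). Without the strengthened hypothesis, the divisibility relation, and this degree check, your phrase that the freed submaximal minors ``supply precisely the cross terms needed'' is a restatement of the conclusion rather than an argument; this is a genuine gap at the decisive step.
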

\begin{proof}
By Kronecker's classification, $M(x_0,x_1)$ can be reduced in block form whose blocks are irreducible pencils. The proof is by induction on the number of blocks. We first prove the statement for the irreducible pencils. Let $r \leq h$. For the matrix pencils $M=L_h$ and $M=R_h$ the statement is clear: $(I(M,r))_r$ contains all binary forms of degree $r$. In particular, $I(M,r)^{\sat} = (1)$ and $(I(M,r))_r  = (I(M,r)^{\sat})_r$ as desired. For the Jordan pencil $M=J_h(\lambda)$, we consider two cases. If $r < h$, then it is easy to verify that $I(M,r)_r$ contains all binary forms of degree $r$: as before, $I(M,r)^{\sat} = \bbC[x_0,x_1]$ and $(I(M,r))_r  = (I(M,r)^{\sat})_r$ as desired. If $r= h$, then $I(M,r) = (x_0 + \lambda x_1)^r$ coincides with its saturation. 

If the pencil is not irreducible, then it is a sum of irreducible pencils, that is
\[
M(x_0,x_1) = \left(
\begin{array}{ccc}  
M_1(x_0,x_1) & & \\
& \ddots & \\
&  & M_s(x_0,x_1)\\
\end{array}
\right),
\]
where $M_j(x_0,x_1)$ is an irreducible pencil. In this case, write $M(x_0,x_1) = M_1 (x_0,x_1) \boxplus \cdots \boxplus M_s(x_0,x_1)$. Order the blocks so that the Jordan blocks appear last; as a consequence, if $M_s(x_0,x_1)$ is not a Jordan block, then there are no Jordan blocks in the representation of $M$. 

We use induction on the number of blocks $s$ to prove a slightly stronger statement. We show that $I(M,r)_r = I(M,r)^\sat_r$ and if there are no Jordan blocks in the representation, then either $I(M,r)_r = \bbC[x_0,x_1]_r$ or $I(M,r)_r = 0$. The base of the induction for $s = 1$ follows from the discussion on the irreducible pencils. 

If $s \geq 2$, let $N = M_1 \boxplus \cdots \boxplus M_{s-1}$, so that $M = N \boxplus M_s$. By the induction hypothesis, the statement holds for $N$. In particular, for every $p = 0 \vvirg r$, $(I(N,p))_p = (I(N,p)^\sat) _ p = f_p \cdot \bbC[x_0,x_1]_{p-d_p}$, where $f_p \in \bbC[x_0,x_1]$ is a binary form of degree $d_p$. By construction, $f_p$ is a divisor of $f_{p+1}$; moreover $f_p = 0$ if $I(N,p) = 0$ and $f_p = 1$ if $I(N,p) = \bbC[x_0,x_1]$.

First, suppose $M_s = L_h$ or $M_s = R_h$ from some $h$. In this case, there are no Jordan blocks in the representation. By the induction hypothesis either $I(N,p)_p = 0$, namely $f_p = 0$, or $I(N,p)_p = \bbC[x_0,x_1]_p$, namely $f_p = 1$. The block diagonal structure of $M$ guarantees
\[
I(M,r)_r = \sum_{p=0}^r  I(N,r-p)_{r-p} \cdot I(M_s,p)_{p} .
\]
If there is $p \leq r$ such that $I(N,r-p)_{r-p} \neq 0$, then $I(M,r)_r = \bbC[x_0,x_1]_{r-p} \cdot \bbC[x_0,x_1]_{p} = \bbC[x_0,x_1]_r$, otherwise $I(M,r) = 0$. This proves the desired assertion.

If $M_s = J_h(\lambda)$ is a Jordan block for some $h$ and $\lambda$, then, as before 
\[
I(M,r)_r = \sum_{q=0}^r  I(N,q)_{q} \cdot I(M_s,r-q)_{r-q} .
\]
Let $\bar{q} = \max\{ q \leq r: I(N,q)_q \neq 0\}$ and let $\bar{p} = r-\bar{q}$. 

If $\bar{p} > h$, then there are no nonzero terms in the summation and $I(M,r)_r = 0$.

If $\bar{p} \leq h$, then the relevant range of the summation is $q =r-h \vvirg \bar{q}$, and correspondigly $p = r-q = h \vvirg \bar{p}$ (in descending order). The summand corresponding to $q = r-h$ is 
\[
f_{r-h}\cdot \bbC[x_0,x_1]_{r-h -d_{r-h}} \cdot (x_0+\lambda x_1)^h = (x_0+\lambda x_1)^h f_{r-h} \cdot \bbC[x_0,x_1]_{r-h -d_{r-h}}.
\]
The summands for $q > r-h$ are  
\[
f_{q}\cdot \bbC[x_0,x_1]_{q -d_{q}} \cdot \bbC[x_0,x_1]_{r-q} = f_q \cdot \bbC[x_0,x_1]_{r-d_{q}} .
\]
By construction, for every $q$ with $r-h < q \leq \bar{q}$, we have that $f_q$ is divisible by $f_{r-h+1}$. Therefore, 
\[
f_q \cdot \bbC[x_0,x_1]_{r-d_{q}} \subseteq f_{r-h + 1} \cdot \bbC[x_0,x_1]_{r-h+1 - d_{r-h+1}}
\]
and we obtain
\[
I(M,r)_r = (x_0+\lambda x_1)^h f_{r-h} \cdot \bbC[x_0,x_1]_{r-h -d_{r-h}} + f_{r-h + 1} \cdot \bbC[x_0,x_1]_{r-h+1 - d_{r-h+1}}.
\]
Let $g = \gcd(f_{r-h + 1},(x_0+\lambda x_1)^h f_{r-h})$. Since $f_{r-h+1}$ is a multiple of $f_{r-h}$, we have $g = (x_0+\lambda x_1)^{h'}f_{r-h}$ for some $h' \leq h$; here $h'$ is the largest power of $(x_0+\lambda x_1)$ dividing $f_{r-h+1}/f_{r-h}$. In particular $\deg (g ) = d_{r-h} + h'$.

We prove that $I(M,r)_r = g \cdot \bbC[x_0,x_1]_{r-\deg(g)}$. To see this, it suffices to show that the relatively prime polynomials $g_1 = \frac{1}{g} f_{r-h + 1}$ and $g_2 = \frac{1}{g} (x_0+\lambda x_1)^h f_{r-h}$ generate $\bbC[x_0,x_1]_{r-\deg(g)}$. It is a general fact that if $g_1,g_2$ are coprime of degree $\delta_1,\delta_2$ respectively, then $(g_1,g_2)_{\rho} = \bbC[x_0,x_1]_{\rho}$ if $\delta_1+ \delta_2 - 1 \leq \rho$. In our case
\begin{align*}
\delta_1 + \delta_2 -1 &= d_{r-h+1} - \deg(g) + d_{r-h} - \deg(g) - 1  \\
&= d_{r-h+1} + h - h' - 1 - (d_{r-h} + h') \leq r- \deg(g).
\end{align*}
This concludes the proof.
\end{proof}

\autoref{thm: pencils general} is a straightforward consequence of \autoref{lemma: binary determinantal are saturated}.
\begin{proof}[Proof of \autoref{thm: pencils general}]
Let $M(x_0,x_1) = T(V_1^*)$ be the image of the first flattening, regarded as a matrix of linear binary forms. Let $J$ be the ideal generated by the minors of size $k$ of $M(x_0,x_1)$. By \autoref{lemma: binary determinantal are saturated}, we have $J_k = J_k^{\sat}$. From the assumptions, $B_{k}^1(T)$ is a $0$-dimensional scheme of degree $p$, which, by the conciseness of $T$, can be regarded as a subscheme of $\bbP (V_1^*)$. In particular, $I(B_1^k(T)) = J^\sat = (f)$ for a homogeneous polynomial $f$ of degree $p$. Write $\frakm = (x_0,x_1)$ for the irrelevant ideal of $\bbC[x_0,x_1]$; since $J_k = J^\sat_k$, we have $J_k = (f \cdot \frakm^{k-p})_k$.

This shows that $\mu^1_{k,T}$ extends to a morphism $\mu^{1}_{k,T} : \bbP V_1^* \to \bbP (\Lambda^k V_2 \otimes \Lambda^k V_3)$ defined by $(x_0,x_1) \mapsto (x_0^{k-p}, x_0^{k-p-1}x_1 \vvirg x_1^{k-p})$, up to a linear transformation of $\bbP (\Lambda^k V_2 \otimes \Lambda^k V_3)$. This shows that $\scrC^1_k(T)$ is parameterized by all monomials of degree $k-p$, hence it is the rational curve of degree $k-p$.
\end{proof}

We record an easy consequence of \autoref{thm: pencils general} which characterizes the least and the most degenerate situations, that is when the collineation varieties are rational normal curves of maximal possible degree and when they are linear spaces.
\begin{corollary}
Let $T \in V_1 \otimes V_2 \otimes V_3$ be a concise tensor with $n_i = \dim V_i$, such that $n_1 = 2$ and $n_2\leq n_3$. 
\begin{enumerate}[(i)]
    \item Suppose $n_1 = n_2$. Then $B^1_{n_1-1}(T) = \emptyset$ if and only if $\scrC^1_{n_1-1}(T)$ is a rational normal curve of degree $n_1-1$.
    \item Suppose $n_1 < n_2$. Then $B^1_{n_1}(T) = \emptyset$ if and only if $\scrC^1_{n_1}(T)$ is a rational normal curve of degree $n_1$.
    \item $B^1_2(T) \neq \emptyset$ if and only if all (well-defined) collineation varieties are either $\bbP^1$ or a single point. 
    \end{enumerate}
\end{corollary}
\begin{proof}
The proof of (i) and (ii) is an immediate consequence of \autoref{thm: pencils general}.

For part (iii), let $t$ be the largest integer such that $\bbP (T(V_1^*)) \not \subseteq \sigma_{t-1}^{V_2 \otimes V_3}$ and let $p_k =\deg(B_{k+1}^1(T))$ for $1\leq k\leq t-1$. In particular $\scrC^1_k(T)$ is well-defined for $k \leq t-1$. First, observe that if $p_k = k+1$, then $\scrC^1_k(T)$ is a single point: indeed, in this case, the ideal of $(k+1) \times (k+1)$ minors is generated by a single element. Moreover, in this case it is easy to show that $\bbP (T(V_1^*)) \subseteq \sigma_{k+1}^{V_2 \otimes V_3}$.

Therefore, in the following, assume $p_k \leq k$ for every $k \leq t-1$; we are going to show $p_k = k$. We proceed by induction on $k$. The base of the induction follows from the hypothesis because $\deg(B^1_2(T)) \leq 1$ and $B^1_2(T) \neq \emptyset$. The induction hypothesis guarantees $k-1\leq p_k\leq k$ because $p_{k-1} \leq p_k$. Since the linear space of partials of the minors of size $k$ is contained in the space of minors of size $k-1$, the intersection multiplicity of a point $q\in B_k^1(T)$, regarded as a point on $B^1_{k+1}(T)$ increases at least by one. This proves $p_k=k$. Geometrically, the initial single intersection point in $B_2^1(T)$ gets fatter and fatter along the intersection with higher degree minors. 
\end{proof}

\section{Nets}\label{sec: nets}
If $\dim V_1 = 3$, then the image of the first flattening of a concise tensor $T \in V_1 \otimes V_2 \otimes V_3$ is regarded, projectively, as a $2$-dimensional space of matrices. This is the case of nets of matrices. In this section, we characterize the collineation varieties of nets of matrices when $\dim V_2, \dim V_3$ are small. In particular, we will consider the cases $n_2 = 2$ or $(n_2,n_3) = (3,3)$.

In these cases, the only collineation varieties of interest are $\scrC^i_2(T)$. In particular, all collineation varieties arise as images of $\bbP^2$ via a system of quadrics. Following the construction of \autoref{subsec: linear series and projections}, we record the following result. 
\begin{lemma}\label{lemma: classification blow up P2}
Let $L \subseteq S^2 \bbC^3$ be a linear subspace of quadrics of dimension $m+1 \geq 2$ with $0$-dimensional base locus $B \subseteq \bbP^2$. Let
    \[
    \phi_L : \bbP^2 \dashto \bbP L^*
    \]
    be the induced map and let $X_L$ be the closure of its image. If $m = 5-\deg(B)$, then $X_L$ coincides with one of the following:
\begin{itemize}
    \item if $B = \emptyset$, then $X_L = \nu_2(\bbP^2)$;
    \item if $B $ is a single point, then $X_L = S_{(1,2)}$ is the cubic scroll in $\bbP^4$;
    \item if $B $ is a set of two points, then $X_L = \bbP^1 \times \bbP^1$ is the quadric surface in $\bbP^3$;
    \item if $B $ is a local scheme of length $2$, then $X_L = S_{(0,2)}$ is the cone over a conic;
    \item if $B$ is a scheme of length $3$ different from a fat point of multiplicity $2$, then $X_L = \bbP^2$ is a linear space;
    \item if $B$ is a fat point of multiplicity $2$, then $X_L$ is a plane conic curve;
    \item if $B$ is a scheme of length $4$, then $X_L$ is the projective line $\bbP^1$.
\end{itemize}
\end{lemma}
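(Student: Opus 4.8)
The plan is to reduce first to the case where $L$ is the \emph{complete} linear system $I(B)_2$ of conics through $B$, and then to identify $X_L$ as a projection of the Veronese surface $\nu_2(\bbP^2) \subseteq \bbP^5$, classifying the image according to the scheme structure of $B$.

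First I would show that $L = I(B)_2$. One always has $L \subseteq I(B)_2$, and vanishing on $B$ imposes at most $\deg(B)$ conditions on $S^2\bbC^3$, so $\dim I(B)_2 \geq 6 - \deg(B) = \dim L$. It then suffices to check that equality holds, i.e.\ that $B$ imposes independent conditions on conics. A direct inspection of the finitely many types of $0$-dimensional schemes shows that every scheme of length $\leq 3$ imposes independent conditions on conics, and that a length-$4$ scheme can fail to do so only when it forces a line into the base locus, which is excluded since $B$ is $0$-dimensional. Hence $\dim I(B)_2 = \dim L$ and $L = I(B)_2$. By the construction of \autoref{subsec: linear series and projections}, the projection center $\bbP L^\perp$ is then $\langle \nu_2(B)\rangle = \bbP I(B)_2^\perp$, a linear space of dimension $\deg(B) - 1$, and $X_L$ is the image of $\nu_2(\bbP^2)$ under linear projection away from $\langle \nu_2(B)\rangle$.

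The main body of the proof is then a case analysis, organised by how $\langle \nu_2(B)\rangle$ meets $\nu_2(\bbP^2)$ and using that $\deg \nu_2(\bbP^2) = 4$, so that a birational projection from a center meeting the surface in a length-$\deg(B)$ scheme has image of degree $4 - \deg(B)$. For $\deg(B) = 0$ there is no projection and $X_L = \nu_2(\bbP^2)$. For $\deg(B) = 1$ the center is a single point of the surface and the projection is the smooth cubic scroll $S_{(1,2)} \subseteq \bbP^4$. For $\deg(B) = 2$ the center is a line, a genuine secant when $B$ is reduced---giving the smooth quadric $\bbP^1 \times \bbP^1 \subseteq \bbP^3$---and a tangent line when $B$ is a local scheme, the tangency forcing the image quadric to degenerate to the cone $S_{(0,2)}$. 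For $\deg(B) = 3$ the center is a plane: when $B$ is not the fat point it is a general secant plane and $\phi_L \colon \bbP^2 \dashto \bbP^2$ is birational with $X_L = \bbP^2$, whereas when $B = \mathfrak{m}_p^2$ the center is exactly the embedded tangent plane $T_{\nu_2(p)}\nu_2(\bbP^2)$; there the conics through $B$ are precisely the cones with vertex $p$, so $\phi_L$ factors as $\bbP^2 \dashto \bbP^1 \xrightarrow{\nu_2} \bbP^2$ through the projection from $p$, and $X_L$ is a plane conic. Finally, for $\deg(B) = 4$ one has $m = 1$, the map $\phi_L$ is the pencil of conics through $B$, and $X_L = \bbP^1$.

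I expect the subtlest points to be the two geometric dichotomies within the equicardinal cases: secant versus tangent line for $\deg(B) = 2$ (smooth quadric versus quadric cone) and general plane versus embedded tangent plane for $\deg(B) = 3$ ($\bbP^2$ versus a plane conic). The hardest is the fat-point case, where the surface collapses to a curve; the crucial input is the classical fact that projecting the Veronese surface from its embedded tangent plane at a point collapses it onto a conic, which I would establish through the explicit factorization through the projection from $p$ indicated above. The remaining identifications (cubic scroll, smooth quadric, and the birational $\deg(B) = 3$ case) then follow from the degree count together with the standard descriptions of these projections of the Veronese, and the reduction step guarantees that the center is always $\langle \nu_2(B)\rangle$, so that no further degeneration can occur.
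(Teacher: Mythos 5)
Your proposal is correct and follows essentially the same route as the paper's proof: both reduce to $L = I(B)_2$ by showing $B$ imposes independent conditions on conics (the paper via monotonicity of the Hilbert function, handling $\deg(B)=4$ separately by Bertini), and then identify $X_L$ as the projection of $\nu_2(\bbP^2)$ from the center $\langle \nu_2(B)\rangle$, classified by the scheme type of $B$. The only substantive difference is how the individual cases are certified: the paper $\SL_3$-normalizes $B$ and computes explicit parameterizations and equations, in particular checking that the differential of $\phi_L$ has full rank (plus semicontinuity) in the length-$3$ non-fat case — which is precisely the point where your bare ``degree count'' would be circular, since it presupposes the image is a surface rather than a curve; your appeal to the classical birationality of the (possibly degenerate) quadratic Cremona transformations supplies that missing input, so this is a difference of verification style rather than a gap.
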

\begin{proof}
  Since $B$ is $0$-dimensional, by Bertini's Theorem two generic elements of $L$ cut out a $0$-dimensional scheme $\bbX$, with $\deg \bbX = 4$ and $\dim I(\bbX)_2 = 2$. If $\deg(B) = 4$, then $B = \bbX$. In this case $\dim L = 2$, and $\phi_L$ is surjective onto $\bbP L^* = \bbP^1$.

 In all other cases, $B$ is a scheme of degree at most $3$. We first observe $\dim I(B)_2 = 6-\deg(B)$. This is a consequence of basic properties of the Hilbert function of $0$-dimensional schemes: since $I(B)$ is generated by forms of degree $2$, the Hilbert function of $B$ satisfies $h_B(2) = \deg(B)$, or equivalently $\dim I(B)_2 = 6-\deg(B)$. The condition $m = 5-\deg(B)$ is equivalent to $L = I(B)_2$.
 
Since $B$ is a scheme of degree at most $3$, it can be normalized using the action of $\SL_3$ on $\bbP^2$. Implicitly, this classification is recorded in \cite{BucLan:ThirdSecantVar}. The proof is obtained by computing $X_L$ for every possible normalized $B$. We use coordinates $z_0 \vvirg z_m$ on $\bbP L^*$.
\begin{itemize}
    \item If $B = \emptyset$, the statement is clear.
    \item If $B = \{ p_0\}$ is a single point, then we may assume $p_0 = (1,0,0)$. In this case, we have $\phi_L (x_0,x_1,x_2) = ( x_0x_1,x_0x_2 , x_1^2 ,x_1x_2,x_2^2)$. This gives the standard parameterization of the cubic scroll $S_{(1,2)}$ in $\bbP^4$.
    \item If $B = \{ p_1,p_2\}$ is a set of two points, assume $p_1 = (0,1,0)$ and $p_2 = (0,0,1)$. Then $\phi_L (x_0,x_1,x_2) = (x_0^2,x_0x_1,x_0x_2,x_1x_2)$ satisfies the single quadratic equations $z_0z_3-z_1z_2=0$. This shows that $X_L$ is the quadric surface in $\bbP^3$. 
    \item If $B$ is a scheme of length $2$, we may assume $I(B) = (x_1^2,x_2)$. Considering the degree $2$ component of $I(B)$, we have $\phi_L (x_0,x_1,x_2) = (x_1^2,x_0x_2,x_1x_2,x_2^2)$. This satisfies the quadratic equation $z_0z_2-z_1^2=0$, therefore $X_L$ is a quadric cone.
    \item If $B$ is a scheme of length $3$ different from a fat point of multiplicity $2$, we have three possible cases, depending on whether $B$ is supported at one, two, or three points. In these three cases $\dim I(B)_2 = 3$; to show that $X_L = \bbP^2$, we observe that the three quadric generators have no polynomial dependencies. By semicontinuity, it is enough to do this in the case where $B$ is supported at a single point: in this case $I(B)$ is a curvilinear scheme, which may be normalized so that $\phi_L (x_0,x_1,x_2) = (x_2^2,x_1x_2,x_1^2-x_0x_2)$. The differential of $\phi_L$ at $(0,0,1)$ is full rank, showing that $\dim X_L = 2$, hence $X_L = \bbP^2$.
    \item If $B$ is a fat point of multiplicity $2$, then we may assume its support is $(1,0,0)$, so that $\phi_L (x_0,x_1,x_2) = (x_1^2,x_1x_2,x_2^2)$. This parameterizes the plane conic having equation $z_0z_2-z_1^2$.
\end{itemize}
\end{proof}

In order to study collineation varieties of nets, we rely on classification results for tensors in $\bbC^3 \otimes \bbC^2 \otimes \bbC^{n_3}$ and $\bbC^3 \otimes \bbC^3 \otimes \bbC^3$. For the first one, we resort once again to Kronecker's classification of pencils of matrices. The second one has been extensively studied, in the context of the geometry of elliptic curves \cite{Ng:Classification333trilinearForms} and in geometric invariant theory \cite{Nur:OrbitsInvariantsCubicMatrices,Nur:ClosuresNilpotent}. We use the classification recorded recently in \cite{DitDeGrMar:ClassificationThreeQutrit}, which is built on \cite{Nur:OrbitsInvariantsCubicMatrices,Nur:ClosuresNilpotent}.

\subsection{Nets in $\bbC^2 \otimes \bbC^{n_3}$}\label{C2C3}

In this section, we regard tensors in $V_1 \otimes V_2 \otimes V_3$ with $(n_1,n_2,n_3) = (3,2,n_3)$ as linear spaces of matrices of size $2 \times n_3$. If  $T$ is concise in the $\bbC^3$ factor, then this is a net of $2 \times n_3$ matrices, and one can consider the map defined by its $2 \times 2$ minors:
\[
\mu^{1}_{k,T} : \bbP {\bbC^3}^* \dashto \bbP( \Lambda^2 \bbC^2 \otimes \Lambda^2 \bbC^{n_3}) \simeq \bbP (\Lambda^2 \bbC^{n_3}).
\]
After possibly restricting the $\bbC^{n_3}$ factors, we may assume $n_3\leq 6$. The space $\bbC^3 \otimes \bbC^2 \otimes \bbC^6$ has finite representation type in the sense of Gabriel's Theorem \cite{Gabriel-quivers}. In particular, it is union of $26$ orbits for the action of $\GL_3 \times \GL_2 \times \GL_6$. These orbits are recorded in \cite[Sec. 10.3]{Lan:TensorBook} as pencils of $3 \times 6$ matrices. 

Orbits 1--9, 11, 12 in the classification of \cite[Sec. 10.3]{Lan:TensorBook} are not concise in the $\bbC^3$ factor; hence they do not give rise to a net and in this case the image of the map $\mu^{1}_{k,T}$ has dimension at most $1$. The result in these cases can be deduced using \autoref{thm: pencils general}. 

Orbit 10 is not concise on the $\bbC^2$ factor, hence $\mu^{1}_{k,T}$ is not defined. 

The results for Orbits 13--26 are recorded in the following: 
\begin{theorem}\label{thm: small nets}
    Let $T \in  V_1 \otimes V_2 \otimes V_3$ be a concise tensor with $\dim V_1 = 3$, $\dim V_2 = 2$. Then, $\scrC^1_{2}(T)$ is one of the following:
    \begin{itemize}
        \item The projective line $\bbP^1$: this occurs for orbits 14, 15, 18;
        \item The linear space $\bbP^2$: this occurs for orbits 13, 16, 17, 20;
        \item A quadric cone in $\bbP^3$: this occurs for orbit 21;
        \item A quadric surface in $\bbP^3$: this occurs for orbit 22;
        \item A rational cubic scroll $S_{(1,2)}$ in $\bbP^4$; this occurs for orbits 19, 24;
        \item A Veronese surface $\nu_2(\bbP^2)$ in $\bbP^5$; this occurs for orbits 23, 25, 26.
    \end{itemize}
\end{theorem}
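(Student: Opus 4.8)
The plan is to reduce the classification to \autoref{lemma: classification blow up P2} by an explicit computation for each orbit, using the normal forms recorded in \cite[Sec.~10.3]{Lan:TensorBook}. For each orbit among $13$--$26$, I would write the image of the first flattening as a $2 \times n_3$ matrix $M(x_0,x_1,x_2)$ of linear forms, form the $2 \times 2$ minors (which are quadrics in $x_0,x_1,x_2$), and let $L \subseteq S^2 \bbC^3$ be their linear span. Because $\dim V_2 = 2$, the base locus $B^1_2(T)$ is precisely the locus in $\bbP^2 = \bbP V_1^*$ where the two rows of $M$ become proportional, i.e.\ where $M$ drops to rank $\leq 1$. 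This is a concrete condition that can be solved directly from each normal form, yielding $B$ as a subscheme of $\bbP^2$ together with its length and local structure.

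The decisive step is to verify, in every case, the hypotheses of \autoref{lemma: classification blow up P2}: that $B$ is $0$-dimensional and that $L = I(B)_2$, equivalently $\dim L = 6 - \deg(B)$ (using the Hilbert-function count $\dim I(B)_2 = 6 - \deg B$ established in the proof of the lemma). Granting this, the lemma identifies $\scrC^1_2(T) = X_L$ purely from the degree and scheme type of $B$: the empty base locus gives the Veronese surface; a single reduced point gives the cubic scroll $S_{(1,2)}$; two reduced points give the quadric surface; a length-$2$ local scheme gives the quadric cone; a length-$3$ scheme that is not a fat point of multiplicity $2$ gives $\bbP^2$; and $\deg B = 4$ gives $\bbP^1$. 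I would then compile the fourteen computations into the six cases of the statement, recording which base-locus type each orbit produces.

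The main obstacle is the saturation condition $L = I(B)_2$. A priori the span of the minors could be a \emph{proper} subspace of the quadrics through $B$, in which case $\scrC^1_2(T)$ would be a further linear projection of $X_{I(B)_2}$ and the lemma would not apply verbatim; establishing the equality requires checking in each normal form that the minors already exhaust $I(B)_2$. For the orbits with small $n_3$ this follows by a dimension count, but the borderline cases (in particular those giving the Veronese, where one must see that the minors span all of $S^2 \bbC^3$) need explicit verification. A secondary difficulty is determining the precise local structure of the length-$3$ base loci, since this is exactly what separates the $\bbP^2$ case from the plane-conic case of the lemma: part of the content of the theorem is that \emph{no} concise orbit in this range produces a fat point of multiplicity $2$, so the plane-conic alternative never occurs.
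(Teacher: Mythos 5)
Your overall route---normal forms from \cite[Sec.~10.3]{Lan:TensorBook}, computation of the span $L$ of the $2\times 2$ minors for each concise orbit, and identification of the image via \autoref{lemma: classification blow up P2}---is the same as the paper's. The gap is in your ``decisive step'': you propose to verify \emph{in every case} that the base locus $B$ is $0$-dimensional with $L = I(B)_2$, and your compilation assigns each $\bbP^2$ orbit a length-$3$ base scheme and each $\bbP^1$ orbit a degree-$4$ one. That verification genuinely fails for five of the fourteen orbits, and the proposal has no fallback for them. Per the paper's proof, orbits 13 and 20 have $\dim L = 3$ with a \emph{$1$-dimensional} base locus, so the lemma is inapplicable there; the conclusion $\scrC^1_2(T)=\bbP^2$ comes instead from a direct check (the three minors share a common linear factor, and after removing it the map is linear: for instance a net of the form $\left(\begin{smallmatrix} x_0 & 0 & x_1 \\ 0 & x_0 & x_2 \end{smallmatrix}\right)$ has minors $x_0^2, x_0x_1, x_0x_2$, whose image is visibly $\bbP^2$). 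Likewise, for orbits 14, 15, 18 the paper uses only that $\dim L = 2$: two linearly independent quadrics define a nonconstant map to $\bbP^1$, so the image is $\bbP^1$ regardless of what $B$ looks like. Here too the base locus can be positive-dimensional (e.g.\ the concise net $\left(\begin{smallmatrix} x_0 & x_1 & x_2 \\ 0 & x_0 & x_0\end{smallmatrix}\right)$ has $L = \langle x_0^2,\, x_0(x_1-x_2)\rangle$, whose base locus contains the line $\{x_0=0\}$), so your expected mechanism---the $\deg B = 4$ bullet of the lemma---is not what produces the $\bbP^1$ cases.

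The repair is cheap and lands you exactly on the paper's proof: handle $\dim L = 2$ separately (image automatically $\bbP^1$), handle the positive-dimensional base-locus orbits separately (factor out the common linear component), and apply \autoref{lemma: classification blow up P2} only to the remaining nine orbits, where $B$ is indeed $0$-dimensional or empty; for those, your checklist ($\dim L = 6 - \deg B$ together with the local structure of $B$, computed in \Macaulay) is precisely what the paper's script verifies, including the point you correctly flag that no concise orbit produces a fat point of multiplicity $2$, so the plane-conic case never occurs. As written, however, the proposal stalls at orbits 13, 14, 15, 18, 20, because the hypothesis you set out to verify is false there.
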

\begin{proof}
Let $L \subseteq S^2 V_1$ be the linear space of quadrics defined by the $2 \times 2$ minors of the net of quadrics corresponding to each orbit. In the case of orbits 14, 15, and 18, $\dim L = 2$, hence the image is $\bbP^1$. For orbits 13, 20, $\dim L = 3$ and its base locus $B_L$ is $1$-dimensional; in this case, the collineation variety is $\bbP^2$.

In all other cases, a direct \texttt{Macaulay2} computation shows that the base locus $B_L$ is $0$-dimensional (or empty) and verifies the equality $m=5-\deg(B_L)$. Therefore the assumption of \autoref{lemma: classification blow up P2} is satisfied and implies the result. The code to perform the computation is available in the supplementary file \texttt{CollineationVariety.m2}. For each orbit, the script computes the dimension and degree of the base locus, and of its radical as well as the value of $\dim L$. 
\end{proof}

\subsection{Nets in $\bbC^3 \otimes \bbC^3$}

Let $V_1,V_2,V_3 \simeq \bbC^3$ and let $T \in  V_1 \otimes V_2 \otimes V_3$ be a concise tensor. The tensor $T$ naturally defines three nets of $3 \times 3$ matrices in three different ways 
\begin{align*}
T(V_1^*) \subseteq V_2 \otimes V_3, \qquad T(V_2^*) \subseteq V_3 \otimes V_1, \qquad T(V_3^*) \subseteq V_1 \otimes V_2.
\end{align*}
We consider the first linear space $T(V_1^*)$, and study the corresponding collineation varieties $\scrC^1_{2}(T) \subseteq \bbP (\Lambda^2 V_2 \otimes \Lambda^2 V_3) \simeq \bbP^8$. Since the parameterization is given by quadrics, $\scrC^1_2(T)$ is a projection of the Veronese variety $\nu_2(\bbP V_1^*)$ as discussed in \autoref{subsec: linear series and projections}.

The orbit structure of $\bbP( V_1  \otimes V_2 \otimes V_3)$ under the action of $\SL(V_1) \times \SL(V_2) \times \SL(V_3)$ is \emph{tame}, in the sense of Gabriel's Theorem \cite{Gabriel-quivers}. There is a two parameter family of generic tensors, some one parameter families and a large number of single orbits. The explicit orbit classification is the object of a long series of works \cite{Ng:Classification333trilinearForms,Nur:OrbitsInvariantsCubicMatrices,Nur:ClosuresNilpotent}. We refer to \cite{DitDeGrMar:ClassificationThreeQutrit} for a recent, complete classification. We point out that the collineation variety is invariant under a swap of the second and third tensor factors. Therefore, we use the classification of \cite{DitDeGrMar:ClassificationThreeQutrit} to deduce normal forms under the action of $\SL(V_1) \times [(\SL(V_2) \times \SL(V_3)) \rtimes \bbZ_2]$ where $\bbZ_2$ acts by swapping the factors $V_2$ and $V_3$ via a fixed isomorphism between the two spaces.

We follow  \cite{Ng:Classification333trilinearForms}  for the description of the $2$-parameter family of generic tensors. The three nets $T(V_i^*)$ naturally define three ternary cubics $\det(T(V_i^*)) \in S^3 V_i^*$ where $T(V_i^*)$ is regarded as a matrix of linear forms on $V_i^*$. Accordingly, these cubics define three plane curves $E_T^i \subseteq \bbP V_i^*$, whose geometry plays an important role in the classification. It turns out that if one of these curves is smooth, then the other two are smooth as well, and they are isomorphic as elliptic curves. In this case \cite{Ng:Classification333trilinearForms} calls the corresponding tensor a \emph{smooth cuboid}; if the curves are singular, then the tensor is a \emph{singular cuboid}. 
\begin{proposition}[Classification of smooth cuboids {\cite{Ng:Classification333trilinearForms,Ish:PositivePropCubicCurves}}]\label{thm: Ng classification}
    Let $T \in V_1 \otimes V_2 \otimes V_3$ be a tensor such that $E_T^1$ is smooth. Then, after a change of coordinates, $T(V_1^*)$ has the form
    \[
    T(V_1^*) = \left(\begin{array}{ccc}
    -p_1 x_0+x_1 & -p_2 x_0+x_2& 0 \\ 
    p_2 x_0+x_2 &  p_1^2 x_0+a x_0+p_1 x_1 & x_1 \\
    0 & x_1 & -x_0 
    \end{array}\right)
    \]
    where $E_T^1 = \{ x_0 x_2^2 - x_1^3 - a x_0^2x_1 - b x_0^3= 0 \} $, $b = p_1^3-p_2^2+p_1 a$,  and $(1:p_1:p_2) \in E^1_T$.
\end{proposition}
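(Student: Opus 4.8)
The plan is to read the statement as a normal-form result for \emph{linear determinantal representations} of the smooth plane cubic $E^1_T$, and to separate the roles of the three factors of $\GL(V_1)\times\GL(V_2)\times\GL(V_3)$: the action on $V_1$ normalizes the cubic $\det(T(V_1^*))$, while the action on $V_2\times V_3$ is precisely the equivalence relation on determinantal representations. Since $T$ is concise, the first flattening identifies $V_1^*$ with $T(V_1^*)$, so viewing $T$ as a map $M : V_2^*\otimes\cO_{\bbP^2}(-1)\to V_3\otimes\cO_{\bbP^2}$ on $\bbP^2=\bbP V_1^*$, the determinant $\det M$ cuts out $E^1_T$ and the cokernel of $M$ is a line bundle $\calL$ of degree $3$ supported on $E^1_T$.

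First I would normalize the cubic. A smooth plane cubic has nine flexes; choosing one flex $O$ and applying the transformation of $\PGL(V_1)$ that sends $O$ to $(0:0:1)$ and its flex tangent to $\{x_0=0\}$, then completing the square and the cube, brings $\det(T(V_1^*))$ to the short Weierstrass form $x_0x_2^2-x_1^3-ax_0^2x_1-bx_0^3$. This is the standard reduction and it fixes the equation of $E^1_T$ with the origin of the group law at $O=(0:0:1)$.

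Next I would invoke the classical correspondence between determinantal representations and line bundles (Beauville's theory): for a smooth cubic, the equivalence classes of $3\times 3$ linear determinantal representations under $M\mapsto AMB$ with $A,B\in\GL_3$ --- exactly the residual $\GL(V_2)\times\GL(V_3)$ action --- are in bijection, via the cokernel, with line bundles $\calL$ of degree $3$ on $E^1_T$ with $H^0(E^1_T,\calL(-1))=0$. Since $E^1_T$ is smooth this vanishing holds automatically, so $\calL(-1)$ is a nontrivial element of $\Pic^0(E^1_T)$; under the group-law identification $\Pic^0(E^1_T)\cong E^1_T$ it corresponds to a point $P\ne O$, which in the Weierstrass model has finite coordinates and hence can be written $P=(1:p_1:p_2)$. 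The excluded value $\calL=\cO(1)$ is exactly $P=O$, the point at infinity, explaining why the parameter is always an affine point.

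The main work --- and the main obstacle --- is to produce the explicit representative attached to $P$ and to reduce an arbitrary representation to the displayed matrix. I would do this by resolving $\calL$ over $\bbP^2$, choosing bases of the relevant spaces of sections adapted to $P$, and then using the residual $\GL(V_2)\times\GL(V_3)$ freedom (row and column operations constant in $x$) to clear entries until $M$ takes the stated shape. The computation is pinned down by the determinant check: expanding the displayed matrix along its last row gives
\[
\det T(V_1^*)=x_0x_2^2-x_1^3-ax_0^2x_1+(p_1^3+ap_1-p_2^2)x_0^3,
\]
which agrees with the Weierstrass cubic precisely when $p_2^2=p_1^3+ap_1+b$, i.e.\ exactly when $(1:p_1:p_2)\in E^1_T$. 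This simultaneously verifies that the normal form is a determinantal representation of the correct curve and identifies its parameter $P$ with the prescribed point; computing that the cokernel of the displayed matrix recovers $\calL$ confirms it realizes the intended line bundle, and the injectivity of Beauville's bijection shows that distinct points give inequivalent tensors, consistent with the two-parameter family. The delicate point throughout is bookkeeping the two group actions at once: ensuring the coordinate change that fixes the Weierstrass form is compatible with the representation-equivalence used to clean up $M$, and that the finite residual stabilizer of the Weierstrass cubic in $\PGL(V_1)$ does not produce spurious extra normal forms.
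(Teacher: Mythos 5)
The paper does not actually prove this proposition: it is imported from the cited works of Ng and Ishitsuka, where the classification is obtained by essentially the route you propose, namely the correspondence between $3\times 3$ linear determinantal representations of the smooth cubic $E^1_T$ and line bundles (equivalently, points) on it. So your strategy is the right one and matches the literature, and your determinant expansion is correct: $\det M_P = x_0x_2^2 - x_1^3 - a x_0^2 x_1 + (p_1^3 + a p_1 - p_2^2)x_0^3$, which coincides with the Weierstrass cubic exactly when $p_2^2 = p_1^3 + a p_1 + b$, i.e.\ when $(1:p_1:p_2)\in E^1_T$.

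However, as written the proposal has a genuine gap at its core. The statement requires showing that \emph{every} determinantal representation of the Weierstrass cubic is $\GL(V_2)\times\GL(V_3)$-equivalent to $M_P$ for some affine point $P$; via Beauville's bijection this amounts to showing that $P \mapsto [\coker M_P]$ is a bijection from affine points of $E^1_T$ onto $\Pic^3(E^1_T)\setminus\{\cO_{E^1_T}(1)\}$. Your determinant check cannot supply this: all representations of a fixed cubic have the same determinant up to scale, so it says nothing about which equivalence class $M_P$ occupies, only that $M_P$ represents the right curve. The two computations that would close the gap --- resolving $\calL\cong\cO_{E^1_T}(P+2O)$ over $\bbP^2$ and reducing the resulting matrix to the displayed shape, or alternatively computing $\coker M_P$ and identifying its class with $P$ (or with $-P$; any fixed bijection onto the admissible classes suffices for existence) --- are each announced (``I would do this by resolving $\calL$\dots'', ``computing that the cokernel \dots recovers $\calL$'') but never carried out, and they are the actual content of the theorem. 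Two smaller points. First, the claim that $H^0(E^1_T,\calL(-1))=0$ ``holds automatically'' because $E^1_T$ is smooth is misstated: it fails for $\calL=\cO_{E^1_T}(1)$; what is true is that the vanishing is automatic for cokernels of determinantal representations, by twisting the defining sequence $0\to\cO_{\bbP^2}(-1)^{3}\to\cO_{\bbP^2}^{3}\to\calL\to 0$ by $\cO_{\bbP^2}(-1)$ and taking cohomology, while smoothness is what guarantees $\calL$ is invertible rather than merely torsion-free of rank one. Second, the closing worry about compatibility of the $\PGL(V_1)$ normalization with the $\GL(V_2)\times\GL(V_3)$ reduction is moot: the two actions commute, so one normalizes the cubic first and never touches $V_1$ again; and since the proposition asserts only existence of the normal form, the finite stabilizer of the Weierstrass equation cannot cause trouble.
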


Smooth cuboids form an open set in the space $\bbP (V_1 \otimes V_2 \otimes V_3)$, given by the complement the discriminant of any of the three curves $E_T^i$. We prove that if a tensor $T$ belongs to this open set, then its collineation varieties behave as generic as well, and it coincides with the Veronese surface $\nu_2(\bbP^2)$. 
\begin{theorem}\label{thm: nets smooth}
     Let $T \in V_1 \otimes V_2 \otimes V_3$ be a tensor such that $E_T^1$ is a smooth elliptic curve. Then, the collineation variety $\scrC^i_2(T)$ is the Veronese surface $\nu_2(\bbP^2)$.
\end{theorem}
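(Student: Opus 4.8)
The plan is to reduce the statement to the single claim that the nine $2\times 2$ minors of the matrix $M=T(V_1^*)$ span the whole space $S^2V_1$ of quadrics on $\bbP V_1^*$, and then invoke \autoref{lemma: classification blow up P2}. Indeed, by \autoref{subsec: linear series and projections}, $\scrC^1_2(T)$ is the projection of $\nu_2(\bbP V_1^*)$ from $\bbP L^\perp$, where $L\subseteq S^2V_1$ is the span of these minors. Hence $\scrC^1_2(T)=\nu_2(\bbP^2)$ exactly when $L=S^2V_1$: in that case the center of projection is empty, $\mu^1_{2,T}$ is the complete second Veronese embedding, and the base locus $B^1_2(T)=V(L)$ vanishes. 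Equivalently, this is the first bullet of \autoref{lemma: classification blow up P2} with $B=\emptyset$, whose hypothesis $m=5-\deg(B)$ becomes $\dim L=6$.

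The geometric input is the smoothness of $E:=E_T^1=\{\det M=0\}$. Viewing $M$ as a map of sheaves $V_2^*\ot\cO_{\bbP^2}(-1)\to V_3\ot\cO_{\bbP^2}$, its cokernel is supported on $E$; since $E$ is a smooth cubic, this cokernel is a rank-one torsion-free, hence locally free, sheaf on $E$, i.e. a line bundle. Equivalently, $\rank M(p)=2$ for every $p\in E$ and $\rank M(p)=3$ off $E$, so the net $T(V_1^*)$ contains no matrix of rank $\le 1$. In particular $B^1_2(T)=\emptyset$ and $\mu^1_{2,T}$ is a morphism; it remains to prove the sharper statement $\dim L=6$.

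For this I would restrict the minors to $E$. The restriction $S^2V_1=H^0(\bbP^2,\cO(2))\to H^0(E,\cO_E(2))$ is an isomorphism of $6$-dimensional spaces, its kernel being $H^0(\cO(-1))=0$; so it suffices that the restricted minors span $H^0(E,\cO_E(2))$. Along $E$ the cofactor matrix $\mathrm{adj}(M)$ has rank one, hence factors as $\mathrm{adj}(M)|_E=v\ot w$ with $v=(v_1,v_2,v_3)$ and $w=(w_1,w_2,w_3)$ sections of line bundles $\calA,\calB$ on $E$. Here $\calA$ is $\ker(M|_E)$ twisted by $\cO_E(3)$, so $\deg\calA=3$, and $\deg\calB=3$ as well since $\calA\ot\calB=\cO_E(2)$ has degree $6$. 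Each of $\calA,\calB$ has degree $3$ on the genus-one curve $E$, hence is very ample with $h^0=3$; conciseness of $T$ (nondegeneracy of the induced maps) forces $v_1,v_2,v_3$ and $w_1,w_2,w_3$ to be bases of $H^0(\calA)$ and $H^0(\calB)$, realizing $E$ as a smooth plane cubic in each case. The restricted minors are precisely the products $v_iw_j$, and the multiplication map $H^0(\calA)\ot H^0(\calB)\to H^0(\calA\ot\calB)$ is surjective for degree-$3$ bundles on an elliptic curve (projective normality of elliptic curves). Therefore the $v_iw_j$ span $H^0(\cO_E(2))$, and $\dim L=6$.

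Applying \autoref{lemma: classification blow up P2} then gives $\scrC^1_2(T)=\nu_2(\bbP^2)$. Since smoothness of $E_T^1$ forces $E_T^2,E_T^3$ to be smooth as well and the argument is symmetric in the three factors, the same holds for $\scrC^2_2(T)$ and $\scrC^3_2(T)$. I expect the step $\dim L=6$ to be the real obstacle: an empty base locus alone does not force a linear system of quadrics on $\bbP^2$ to be complete, and smoothness of $E$ must be used essentially. The line-bundle argument above is the cleanest way I see to do this; a more pedestrian alternative is to compute the cofactors of the normal form in \autoref{thm: Ng classification} directly, where the only possible rank drop is controlled by the quantity $3p_1^2+a$, which is nonzero precisely because smoothness makes $x^3+ax+b$ separable, so that the points of $E$ with $x_2=0$ have simple abscissae.
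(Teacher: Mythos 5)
Your proof is correct, and it takes a genuinely different route from the paper's. The paper's proof is purely computational: it normalizes $T$ via the classification of smooth cuboids (\autoref{thm: Ng classification}), treats $a,p_1,p_2$ as parameters, and verifies in \Macaulay{} that the span $L$ of the $2\times 2$ minors has dimension $6$ for all parameter values except on the locus $p_2=0$, $a+3p_1^2=0$, where $E^1_T$ is singular --- this is exactly the ``pedestrian alternative'' you sketch at the end. Your argument instead establishes $\dim L = 6$ intrinsically, via the classical theory of determinantal representations of smooth plane cubics: the cokernel of $M$ is a line bundle on $E$, the adjugate factors along $E$ as $v\otimes w$ with $v,w$ sections of degree-$3$ line bundles $\calA,\calB$ satisfying $\calA\otimes\calB\cong\cO_E(2)$, and surjectivity of the multiplication map $H^0(\calA)\otimes H^0(\calB)\to H^0(\cO_E(2))$, combined with the isomorphism $S^2V_1\cong H^0(E,\cO_E(2))$, yields $L=S^2V_1$. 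You also correctly isolate the real content of the theorem: an empty base locus does not by itself force completeness of the linear system, so the first bullet of \autoref{lemma: classification blow up P2} cannot be invoked without the dimension count. Your approach buys coordinate-freeness, independence from the normal form and from computer algebra, and visible potential for generalization (nets of $n\times n$ matrices with smooth determinantal curve); the paper's approach buys brevity, since the normal form and the \Macaulay{} infrastructure are needed anyway for \autoref{thm: small nets} and \autoref{thm: netC3}, and it pinpoints the exact degenerate parameter locus. Two small repairs: in the surjectivity step, note that the base-point-free pencil trick alone fails when $\calA\cong\calB$, so one should cite the full statement (Mumford/Koizumi, or projective normality of the plane cubic) covering equal degree-$3$ bundles; and the linear independence of $v_1,v_2,v_3$ (resp.\ $w_1,w_2,w_3$) follows not from conciseness as such but from $\det M\neq 0$: a dependence $\sum c_i v_i=0$ on $E$ lifts to $\bbP^2$ because the entries of $\mathrm{adj}(M)$ are quadrics and $I(E)_2=0$, whence $\det(\mathrm{adj}(M))=(\det M)^2=0$, a contradiction. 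Both fixes are immediate and do not affect the structure of your argument.
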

\begin{proof}
    The proof is purely computational. Without loss of generality, we may assume $i = 1$, and the tensor $T$ can be normalized so that $T(V_1^*)$ is the linear space described in \autoref{thm: Ng classification}, where $a,p_1,p_2$ are treated as parameters. The linear space generated by $2 \times 2$ minors of $T(V_1^*)$ has dimension $6$ for every choice of $a,p_1,p_2$, unless 
    \[
    p_2 = 0 \qquad \text{and} \qquad a + 3 p_1^2 = 0.
    \]
    In this case, the curve $E_T^1$ has equation $x_2^2x_0 - (p_1x_0-x_1)^2(2p_1x_0+x_1)$ which is singular. This computation is performed in \Macaulay, and the code is available in the supplementary file \texttt{CollineationVariety.m2}. 
    \end{proof}

The classification of singular cuboids is more delicate. As mentioned above, we follow \cite{DitDeGrMar:ClassificationThreeQutrit}: however only a subset of the normal forms are of interest for us, because we work projectively and the collineation variety is invariant under the action of the group $\bbZ_2$ swapping the second and third factor. The resulting classification has three $1$-parameters families of orbits and 39 additional orbits. \autoref{table: semistable orbits}  and \autoref{table:unstable} record, respectively, representatives of the semistable and the unstable orbits, in the sense of geometric invariant theory. This is a subset of the normal forms recorded in \cite[Table I,III,IV,V]{DitDeGrMar:ClassificationThreeQutrit}.
\begin{longtable}{c|c|c} 
   \cite{DitDeGrMar:ClassificationThreeQutrit} & tensor $T$ & $\scrC^1_2(T)$ \\  \midrule
III.1 & $\begin{smallmatrix}
    (a_0b_0c_0 + a_1b_1c_1 + a_2b_2c_2) + \\ 
    \lambda (a_0b_1c_2 + a_2b_0c_1 + a_1b_2c_0) + \\
     a_0b_2c_1 + a_1b_0c_2
\end{smallmatrix}$ & $\nu_2(\bbP^2)$ \\[10pt]
III.2 & $\begin{smallmatrix}
    (a_0b_0c_0 + a_1b_1c_1 + a_2b_2c_2) + \\ 
    \lambda (a_0b_1c_2 + a_2b_0c_1 + a_1b_2c_0) + \\
     a_0b_2c_1 
\end{smallmatrix}$  & $\nu_2(\bbP^2)$  \\[10pt]
III.ps & $\begin{smallmatrix}
    (a_0b_0c_0 + a_1b_1c_1 + a_2b_2c_2) + \\ 
    \lambda (a_0b_1c_2 + a_2b_0c_1 + a_1b_2c_0) 
\end{smallmatrix}$ & $\nu_2(\bbP^2)$  \\ [10pt]
IV.1(i) & $\begin{smallmatrix}
    a_0b_0c_0 + a_1b_1c_1 + a_2b_2c_2 + \\ 
    a_0b_1c_2 + a_0b_2c_1 + a_1b_0c_2 + a_1b_2c_0
\end{smallmatrix}$  & $S_{(1,2)}$ \\[10pt]
IV.1(ii) & $\begin{smallmatrix}
    a_0b_0c_0 + a_1b_1c_1 + a_2b_2c_2 + \\ 
    a_1b_0c_2 + a_2b_0c_1 + a_0b_1c_2 + a_2b_1c_0
\end{smallmatrix}$  & $\nu_2(\bbP^2)$ \\[10pt]
IV.1(iii) & $\begin{smallmatrix}
    a_0b_0c_0 + a_1b_1c_1 + a_2b_2c_2 + \\ 
    a_2b_1c_0 + a_1b_2c_0 + a_2b_0c_1 + a_0b_2c_1
\end{smallmatrix}$  & $\nu_2(\bbP^2)$ \\[10pt]
IV.2(i) & $\begin{smallmatrix}
    a_0b_0c_0 + a_1b_1c_1 + a_2b_2c_2 + \\ 
    a_0b_1c_2 + a_0b_2c_1 + a_1b_0c_2
\end{smallmatrix}$  & $S_{(1,2)}$ \\[10pt]
IV.2(iii) & $\begin{smallmatrix}
    a_0b_0c_0 + a_1b_1c_1 + a_2b_2c_2 + \\ 
    a_1b_0c_2 + a_2b_0c_1 + a_0b_1c_2
\end{smallmatrix}$  & $\nu_2(\bbP^2)$ \\[10pt]
IV.2(vi) & $\begin{smallmatrix}
    a_0b_0c_0 + a_1b_1c_1 + a_2b_2c_2 + \\ 
    a_2b_1c_0 + a_1b_2c_0 + a_2b_0c_1
\end{smallmatrix}$  & $S_{(1,2)}$\\[10pt]
IV.4(i) & $\begin{smallmatrix}
    a_0b_0c_0 + a_1b_1c_1 + a_2b_2c_2 + \\ 
    a_0b_1c_2 + a_0b_2c_1
\end{smallmatrix}$  & $\bbP^1 \times \bbP^1$\\[10pt]
IV.4(ii) & $\begin{smallmatrix}
    a_0b_0c_0 + a_1b_1c_1 + a_2b_2c_2 + \\ 
    a_1b_0c_2 + a_2b_0c_1
\end{smallmatrix}$  & $S_{(1,2)}$\\[10pt]
IV.4(iii) & $\begin{smallmatrix}
    a_0b_0c_0 + a_1b_1c_1 + a_2b_2c_2 + \\ 
    a_2b_1c_0 + a_1b_2c_0
\end{smallmatrix}$  & $S_{(1,2)}$\\[10pt]
IV.5(i) & $\begin{smallmatrix}
    a_0b_0c_0 + a_1b_1c_1 + a_2b_2c_2 + \\ 
    a_0b_1c_2 + a_1b_2c_0
\end{smallmatrix}$ & $S_{(1,2)}$\\[10pt]
IV.5(ii) & $\begin{smallmatrix}
    a_0b_0c_0 + a_1b_1c_1 + a_2b_2c_2 + \\ 
    a_1b_0c_2 + a_2b_1c_0 
\end{smallmatrix}$ & $S_{(1,2)}$\\[10pt]
IV.7(i) & $\begin{smallmatrix}
    a_0b_0c_0 + a_1b_1c_1 + a_2b_2c_2 + \\ 
    a_0b_1c_2 
\end{smallmatrix}$ & $\bbP^1 \times \bbP^1$\\[10pt]
IV.ps & $\begin{smallmatrix}
    a_0b_0c_0 + a_1b_1c_1 + a_2b_2c_2 
\end{smallmatrix}$ & $\bbP^2$\\[10pt]
V.1 & $\begin{smallmatrix}
  a_0b_1c_2 + a_1b_2c_0 + a_2b_0c_1 + \\
  -(a_0b_2c_1 + a_1b_0c_2 + a_2b_1c_0) + \\
  a_0b_0c_0 + a_1b_1c_2 + a_1b_2c_1 + a_2b_1c_1
\end{smallmatrix}$ & $\nu_2(\bbP^2)$ \\[20pt]
V.2 & $\begin{smallmatrix}
  a_0b_1c_2 + a_1b_2c_0 + a_2b_0c_1 + \\
  -(a_0b_2c_1 + a_1b_0c_2 + a_2b_1c_0) + \\
a_0b_0c_1 + a_0b_1c_0 + a_1b_0c_0 + \\
a_1b_1c_2 + a_1b_2c_1 + a_2b_1c_1
\end{smallmatrix}$  & $\nu_2(\bbP^2)$ \\[20pt]
V.3 & $\begin{smallmatrix}
  a_0b_1c_2 + a_1b_2c_0 + a_2b_0c_1 + \\
  -(a_0b_2c_1 + a_1b_0c_2 + a_2b_1c_0) + \\
  a_0b_0c_0 + a_1b_1c_1
\end{smallmatrix}$ & $\nu_2(\bbP^2)$ \\[20pt]
V.4 & $\begin{smallmatrix}
  a_0b_1c_2 + a_1b_2c_0 + a_2b_0c_1 + \\
  -(a_0b_2c_1 + a_1b_0c_2 + a_2b_1c_0) + \\
  a_0b_0c_1 + a_0b_1c_0 + a_1b_0c_0
\end{smallmatrix}$& $\nu_2(\bbP^2)$ \\[20pt]
V.5 & $\begin{smallmatrix}
  a_0b_1c_2 + a_1b_2c_0 + a_2b_0c_1 + \\
  -(a_0b_2c_1 + a_1b_0c_2 + a_2b_1c_0) + \\
  a_0b_0c_0
\end{smallmatrix}$&  $\nu_2(\bbP^2)$ \\[20pt]
V.ps & $\begin{smallmatrix}
  a_0b_1c_2 + a_1b_2c_0 + a_2b_0c_1 + \\
  -(a_0b_2c_1 + a_1b_0c_2 + a_2b_1c_0)
  \end{smallmatrix}$& $\nu_2(\bbP^2)$ \\
    \caption{The \cite{DitDeGrMar:ClassificationThreeQutrit} classification of semistable orbits, with the corresponding second collineation variety. The classification is reduced to projective space and modulo the swap of the second and third factor. The terms marked with `ps' are the polystable elements of each family, which do not appear in the corresponding table of \cite{DitDeGrMar:ClassificationThreeQutrit}. The parameter $\lambda$ in the first three orbits is assumed to be non-zero.}
    \label{table: semistable orbits}
\end{longtable}

\begin{longtable}{c|c|c} 
   \cite{DitDeGrMar:ClassificationThreeQutrit} & tensor $T$ & $\scrC^1_2(T)$ \\  \midrule
I.1(i) &  $\begin{smallmatrix} 
 a_0b_1c_2 + a_0b_2c_1 + a_1b_0c_2 + \\
     a_1b_1c_1 + a_1b_2c_0 + a_2b_0c_0 \end{smallmatrix}  $ & $S_{(1,2)}$ \\[10pt] 
I.1(ii) &  $\begin{smallmatrix} 
 a_1b_0c_2 + a_2b_0c_1 + a_0b_1c_2 + \\
     a_1b_1c_1 + a_2b_1c_0 + a_0b_2c_0 \end{smallmatrix}  $ & $\nu_2(\bbP^2)$ \\[10pt] 
I.2(i) &  $\begin{smallmatrix} 
 a_0b_1c_2 + a_0b_2c_1 + a_1b_0c_2 + \\
     a_1b_1c_0 + a_1b_1c_1 + a_2b_0c_0 \end{smallmatrix}  $ & $S_{(1,2)}$ \\[10pt] 
I.2(iii) &  $\begin{smallmatrix} 
 a_2b_0c_1 + a_1b_0c_2 + a_2b_1c_0 + \\
     a_0b_1c_1 + a_1b_1c_1 + a_0b_2c_0  \end{smallmatrix}  $ & $\nu_2(\bbP^2)$ \\[10pt] 
I.3(i) &  $\begin{smallmatrix} 
 a_0b_0c_2+a_0b_1c_1+a_0b_2c_0 + \\
     a_1b_0c_1+a_1b_1c_2+a_2b_0c_0  \end{smallmatrix}  $ & $S_{(1,2)}$ \\[10pt] 
I.3(iii) &  $\begin{smallmatrix} 
 a_2b_0c_0+a_1b_0c_1+a_0b_0c_2 + \\
     a_1b_1c_0+a_2b_1c_1+a_0b_2c_0 \end{smallmatrix}  $ & $\bbP^1 \times \bbP^1$ \\[10pt] 
I.4(i) &  $\begin{smallmatrix} 
 a_0b_0c_2 + a_0b_1c_1 + a_1b_0c_1 + \\
     a_1b_1c_0 + a_2b_2c_0  \end{smallmatrix}  $ & $S_{(1,2)}$ \\[10pt] 
I.4(iii) &  $\begin{smallmatrix} 
 a_2b_0c_0 + a_1b_0c_1 + a_1b_1c_0 + \\
     a_0b_1c_1 + a_0b_2c_2 \end{smallmatrix}  $ & $S_{(0,2)}$ \\[10pt] 
I.5(i) &  $\begin{smallmatrix} 
 a_0b_0c_2 + a_0b_2c_0 + a_0b_2c_1 + \\
     a_1b_1c_0 + a_2b_0c_1 \end{smallmatrix}  $ & $\bbP^1 \times \bbP^1$ \\[10pt] 
I.5(iii) &  $\begin{smallmatrix} 
 a_2b_0c_0 + a_0b_0c_2 + a_1b_0c_2 + \\
     a_0b_1c_1 + a_1b_2c_0 \end{smallmatrix}  $ & $S_{(0,2)}$ \\[10pt] 
I.6(i) &  $\begin{smallmatrix} 
 a_0b_0c_2 + a_0b_1c_1 + a_1b_0c_1 + \\
     a_1b_2c_0 + a_2b_1c_0  \end{smallmatrix}  $ & $S_{(1,2)}$ \\[10pt] 
I.6(iii) &  $\begin{smallmatrix} 
 a_2b_0c_0 + a_1b_0c_1 + a_1b_1c_0 + \\
     a_0b_1c_2 + a_0b_2c_1  \end{smallmatrix}  $ & $S_{(0,2)}$ \\[10pt] 
I.7(i) &  $\begin{smallmatrix} 
 a_0b_0c_2 + a_0b_1c_1 + a_0b_2c_0 + \\
     a_1b_0c_1 +  a_2b_1c_0  \end{smallmatrix}  $ & $\bbP^1 \times \bbP^1$ \\[10pt] 
I.7(ii) &  $\begin{smallmatrix} 
 a_0b_0c_2 + a_1b_0c_1 + a_2b_0c_0 + \\
     a_0b_1c_1 +  a_1b_2c_0  \end{smallmatrix}  $ & $S_{(0,2)}$ \\[10pt] 
I.8 &  $\begin{smallmatrix} 
 a_0b_0c_2 + a_0b_2c_0 + a_1b_1c_1 + \\
     a_2b_0c_0  \end{smallmatrix}  $ & $\bbP^2$ \\[10pt] 
I.10 &  $\begin{smallmatrix} 
 a_0b_0c_2 + a_0b_1c_1 + a_0b_2c_0 + \\
     a_1b_0c_1 + a_1b_1c_0 + a_2b_0c_0 \end{smallmatrix}  $ & $\bbP^2$ \\[10pt] 
I.11(i) &  $\begin{smallmatrix} 
 a_0b_0c_2 + a_0b_2c_0 + a_1b_0c_1 + \\
    a_2b_1c_0 \end{smallmatrix}  $ & $\bbP^1 \times \bbP^1$ \\[10pt] 
I.11(ii) &  $\begin{smallmatrix} 
 a_0b_0c_2 + a_2b_0c_0 + a_0b_1c_1 + \\
    a_1b_2c_0 \end{smallmatrix}  $ & $\bbP^2$ \\[10pt] 
I.14(i) &  $\begin{smallmatrix} 
 a_0b_0c_2 + a_0b_1c_0 + a_0b_2c_1 + \\
    a_1b_0c_0 + a_2b_0c_1 \end{smallmatrix}  $ & $\bbP^2$ \\[10pt] 
I.14(iii) &  $\begin{smallmatrix} 
 a_2b_0c_0 + a_0b_0c_1 + a_1b_0c_2 + \\
    a_0b_1c_0 + a_1b_2c_0 \end{smallmatrix}  $ & plane conic \\[10pt] 
\caption{The \cite{DitDeGrMar:ClassificationThreeQutrit} classification of unstable orbits, with the corresponding second collineation variety. The classification is reduced to projective space and modulo swapping the second and third factor} \label{table:unstable}
\end{longtable}

\begin{theorem}\label{thm: netC3}
Let $T \in V_1 \otimes V_2 \otimes V_3$, with $\dim V_i = 3$ for $i = 1,2,3$. If the cubic curve $E^1_T$ is singular, then $T$ is, up to changing coordinates and swapping the second and third factor, one of the tensors in \autoref{table: semistable orbits} or \autoref{table:unstable}. The corresponding collineation varieties on the first factor are recorded in the table.
\end{theorem}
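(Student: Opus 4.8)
The statement combines a classification claim---that every singular cuboid is one of the listed normal forms---with the orbit-by-orbit identification of $\scrC^1_2(T)$, and the plan is to treat these two parts separately.

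For the classification, I would first invoke \autoref{thm: nets smooth} together with \autoref{thm: Ng classification}: the tensors with $E^1_T$ smooth are precisely the members of the dense two-parameter family of smooth cuboids, which constitute the stable orbits. Hence $E^1_T$ is singular exactly for the remaining orbits, namely the strictly semistable and the unstable ones. Since $\scrC^1_2(T)$ depends only on $T(V_1^*)$ up to $\GL(V_1)$ and is manifestly invariant under exchanging the factors $V_2$ and $V_3$, the relevant group is $\SL(V_1)\times[(\SL(V_2)\times\SL(V_3))\rtimes\bbZ_2]$. Starting from the complete $\SL_3^{\times 3}$-classification of \cite{DitDeGrMar:ClassificationThreeQutrit}, I would discard the stable family, let the swap $\bbZ_2$ act on the set of non-stable orbits, and select one representative from each resulting class; together with the polystable representatives of the one-parameter families, which are not listed in \cite{DitDeGrMar:ClassificationThreeQutrit}, this yields exactly \autoref{table: semistable orbits} and \autoref{table:unstable}.

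For the computation, I would follow the projection framework of \autoref{subsec: linear series and projections}. For each representative I write $T(V_1^*)$ as a $3\times 3$ matrix of linear forms on $\bbP V_1^*\simeq\bbP^2$, set $L\subseteq S^2V_1$ to be the linear span of its $2\times 2$ minors, and let $B=B^1_2(T)$ be their common zero scheme, that is the locus where the matrix drops to rank at most $1$. When $B$ is zero-dimensional and $\dim L=6-\deg B$---equivalently $L=I(B)_2$---\autoref{lemma: classification blow up P2} applies, and $\scrC^1_2(T)=X_L$ is read off from the scheme type of $B$. Thus for each orbit it suffices to record $\dim B$, $\deg B$, $\deg(\rad B)$ and $\dim L$; these quantities are computed in \Macaulay\ in the supplementary file \texttt{CollineationVariety.m2}, and the lemma then returns the corresponding entry in the table.

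I expect three points to require care. First, the one-parameter families (orbits III.1, III.2 and V.1--V.5 with their polystable limits) must be handled with $\lambda$ as a symbolic parameter, checking that the type of $\scrC^1_2(T)$ is constant for $\lambda\neq 0$, exactly as in the symbolic computation underlying \autoref{thm: nets smooth}. Second, the hypothesis $\dim L=6-\deg B$ of \autoref{lemma: classification blow up P2} must be verified in each case and not assumed; if it were to fail, the span of the minors would be a proper subspace of $I(B)_2$, the image would be a further linear projection of the variety predicted by the lemma, and I would instead identify $\scrC^1_2(T)$ by computing its homogeneous ideal via elimination. Third---and this is the genuine subtlety---the two length-three cases of the lemma, a fat point of multiplicity two versus a curvilinear scheme, share the same values $\deg B=3$ and $\deg(\rad B)=1$; I would separate them by computing $\dim X_L$, which equals $1$ (a plane conic, as for orbit I.14(iii)) precisely for the fat point and $2$ (the plane $\bbP^2$) otherwise. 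Given the number of orbits involved, the verification is unavoidably computer-assisted, and the correctness of the tables reduces to that of the recorded \Macaulay\ computation.
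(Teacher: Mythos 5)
Your strategy is the same as the paper's: take the normal forms of \cite{DitDeGrMar:ClassificationThreeQutrit} modulo the $\bbZ_2$ swap of the last two factors, compute for each representative the span $L$ of the $2\times 2$ minors and the base scheme $B=B^1_2(T)$ in \Macaulay, and identify $\scrC^1_2(T)$ via \autoref{lemma: classification blow up P2}. Your points of care are well chosen: the symbolic treatment of the parameter $\lambda$ in the one-parameter families, the need to verify $L=I(B)_2$ rather than assume it, and especially the observation that a fat point of multiplicity $2$ and a curvilinear scheme of length $3$ share the invariants $\deg B=3$, $\deg(\rad B)=1$, $\dim L=3$ and must be separated by the dimension of the image --- a genuine subtlety that the paper's computation must also resolve. (Your GIT gloss identifying smooth cuboids with the stable orbits is not justified in your write-up, but it is also not needed: all the argument requires is that the singular cuboids are exhausted by the two tables, which is exactly what the cited classification provides.)

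The gap is that your procedure, as stated, only contemplates zero-dimensional base loci. For two of the unstable orbits, I.11(ii) and I.14(i), the base locus $B^1_2(T)$ is \emph{one-dimensional}; there \autoref{lemma: classification blow up P2} is simply inapplicable --- its hypothesis fails and $\deg B$ is not even the relevant invariant --- and the paper disposes of these two cases by a separate explicit computation showing $\scrC^1_2(T)=\bbP^2$. Your stated fallbacks cover different failure modes (the strict inclusion $L\subsetneq I(B)_2$, and the length-3 ambiguity), so an algorithm that loops over the orbits and reads the answer off the lemma would return nothing for these two orbits. The fix is a tool you already propose --- compute the homogeneous ideal of the image by elimination whenever $\dim B>0$ --- but it must be invoked for this case explicitly; note that the same phenomenon already occurs for the $3\times 2\times n_3$ nets in \autoref{thm: small nets} (orbits 13 and 20), so it is not an exotic degeneration but a recurring one.
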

\begin{proof}
The result of \cite{DitDeGrMar:ClassificationThreeQutrit} guarantees that \autoref{table: semistable orbits} and \autoref{table:unstable} cover all the orbits for the action of $\SL(V_1) \times [\SL(V_2) \times \SL(V_3)] \rtimes \bbZ_2$. 

A direct \texttt{Macaulay2} computation shows that in cases I.11.(ii) and I.14(i), the base locus $B_2^1(T)$ is $1$-dimensional: in these cases an explicit calculation shows $\scrC^1_2(T) = \bbP^2$. In all other cases, $B_2^1(T)$ is $0$-dimensional for which the condition $m=5-\deg(B_2^1(T))$ in \autoref{lemma: classification blow up P2} holds and the collineation varieties are obtained by applying this Lemma.  The code to perform the computation is available in the supplementary file \texttt{CollineationVariety.m2}. For each orbit, the script computes the dimension and degree of the base locus and of its radical as well as the value of $\dim L$. 
\end{proof}

\section{Stratification of tensor spaces via collineation varieties}\label{sec: stratification}

In this section, we introduce a weak version of a stratification of tensor spaces in terms of the collineation varieties. Let $V_1, V_2,V_3$ be vector spaces, $k$ be an integer, $i =1,2,3$ be an integer, and let $X \subseteq \bbP^M$ be an embedded algebraic variety. Define
\[
{\scrF^{i}_{k,X}}^\circ = \{ T \in V_1 \otimes V_2 \otimes V_3 : T \text{ is concise and } \scrC^i_{k}(T) \simeq X\}.
\]
The relation $\scrC^i_{k}(T) \simeq X$ is a linear isomorphism as embedded varieties. From the discussion in \autoref{subsec: linear series and projections}, all varieties $X$ for which ${\scrF^{i}_{k,X}}^\circ$ is nonempty arise as projections of the Veronese variety $\nu_k ( \bbP V_i^*)$. In particular, one can prove that ${\scrF^{i}_{k,X}}^\circ$ is open in its closure, hence it is quasi-projective. The closures $\scrF^{i}_{k,X} = \bar{{\scrF^{i}_{k,X}}^\circ}$ are, often reducible, subvarieties of $\bbP (V_1 \otimes V_2 \otimes V_3)$. For every $k$ and $i$, they define a weak version of a stratification of $\bbP (V_1 \otimes V_2 \otimes V_3)$ in the sense that for any two non-isomorphic varieties $X_1,X_2$ we have that
\begin{itemize}
    \item the intersection of the open strata is empty: ${\scrF^i_{k,X_1}}^\circ \cap {\scrF^i_{k,X_2}}^\circ$;
    \item the intersection of the closed strata is covered by other strata: $\scrF^i_{k,X_1} \cap \scrF^i_{k,X_2} \subseteq \bigcup_{Y \in \calH} \scrF^i_{k,Y}$, where $\calH$ is a suitable (possibly infinite) index set with the property that $\scrF^i_{k,Y} \not \supseteq \scrF^i_{k,X_j}$ for $j=1,2$ and for every $Y \in \calH$.
\end{itemize}
In a stratification, one would usually require a stronger second property, namely that the intersection of closed strata is exactly the union of other strata, possibly infinitely many. This is indeed what happens in the case of $\bbP (\bbC^2 \otimes \bbC^{n_2} \otimes \bbC^{n_3})$ and $\bbP(\bbC^3 \otimes \bbC^3 \otimes \bbC^3)$. In general, we do not expect it to be the case.

In the case of pencils, the strata are uniquely determined by the degree of the corresponding base locus and the Kronecker classification guarantees that they are totally ordered by inclusion. Similarly, following the classification result of \autoref{sec: nets}, one can verify that the strata in $\bbP (\bbC^3 \otimes \bbC^3 \otimes \bbC^3)$ are totally ordered.

However, in general, we expect the stratification to be infinite and difficult to characterize. One can coarsen it, considering certain algebraic or geometric invariants of $X$, such as its dimension, its degree, or the dimension of its linear span. For instance, only considering the degree of $X$ retrieves the stratification given by the characteristic numbers studied in \cite{ConMic:CharNumChromNumTensors}.

\subsection{Stratification of spaces of pencils}\label{subsec: stratification pencils}

\autoref{thm: pencils general} shows that the collineation varieties of a matrix pencil are always rational normal curves, whose degree depends on the intersection multiplicity of the pencil with the varieties of low-rank matrices.

Therefore, in this case, we have the following immediate consequence of \autoref{thm: pencils general}:
\begin{corollary}\label{corol: pencil strata}
  Let $s \leq k \leq n_2 \leq n_3$ be non-negative integers. Let $X_s \subseteq \bbP^s$ be the rational normal curve of degree $s$. Then, 
  \[
 \scrF^1_{k,X_{s}} = \bar{ \{ T \in \bbC^2 \otimes \bbC^{n_2} \otimes \bbC^{n_3} : T \text{ is concise and } \deg B_k^1(T) = k-s\} }.
  \]
\end{corollary}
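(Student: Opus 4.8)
The plan is to derive the statement directly from \autoref{thm: pencils general}, together with the elementary fact that for each fixed degree $d$ all rational normal curves of degree $d$ form a single orbit under the linear automorphisms of their ambient span. First I would unwind the definitions: by construction, ${\scrF^1_{k,X_s}}^\circ$ consists of those concise tensors $T \in \bbC^2 \otimes \bbC^{n_2} \otimes \bbC^{n_3}$ for which $\scrC^1_k(T)$ is well defined and linearly isomorphic, as an embedded variety, to the degree $s$ rational normal curve $X_s \subseteq \bbP^s$. Since the left-hand side of the asserted equality is by definition the closure of ${\scrF^1_{k,X_s}}^\circ$, while the right-hand side is the closure of $\{ T \text{ concise} : \deg B_k^1(T) = k-s\}$, it suffices to show that these two constructible sets coincide, after which passing to closures is immediate.

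The main step is to check that the hypotheses of \autoref{thm: pencils general} are automatically met. For a concise pencil $T$ the subspace $T(V_1^*)$ is two-dimensional, so $\bbP(T(V_1^*))$ is a line in $\bbP(V_2 \otimes V_3)$. The variety $\scrC^1_k(T)$ is well defined exactly when $\bbP(T(V_1^*)) \not\subseteq \sigma_{k-1}^{V_2 \otimes V_3}$, and in that case the base locus $B_k^1(T) = \bbP(T(V_1^*)) \cap \sigma_{k-1}^{V_2 \otimes V_3}$ is a proper subscheme of this line, hence a $0$-dimensional scheme of some finite degree $p$ with $0 \le p \le k$. \autoref{thm: pencils general} then identifies $\scrC^1_k(T)$ with the rational normal curve of degree $k-p$.

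It remains to reconcile the isomorphism type with the numerical invariant. A rational normal curve of degree $d$ is nondegenerate in a $\bbP^d$, and any two such curves of equal degree are projectively equivalent, whereas curves of distinct degrees span linear spaces of distinct dimensions and so cannot be linearly isomorphic as embedded varieties. Consequently $\scrC^1_k(T) \simeq X_s$ holds if and only if $k-p = s$, that is $\deg B_k^1(T) = k-s$. This identifies ${\scrF^1_{k,X_s}}^\circ$ with $\{ T \text{ concise} : \deg B_k^1(T) = k-s\}$ as sets, and taking closures gives the corollary.

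I expect the only delicate point to be the precise matching of the relation $\simeq$ (linear isomorphism of embedded varieties) with the degree of the rational normal curve, in particular at the boundary values $s = 0$ (where $X_0$ is a single point) and $s = 1$ (where $X_1 = \bbP^1$ is a line), and in the extremal range $n_2 = n_3 = k$ that is excluded from the hypotheses of \autoref{thm: pencils general}: there the size $k$ minor is the single determinant, so the image of $\mu^1_{k,T}$ collapses to a point and $\deg B_k^1(T) = k$, consistently giving $s = 0$. These degenerate cases fit the same degree bookkeeping but warrant an explicit verification rather than a blind appeal to the theorem.
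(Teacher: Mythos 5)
Your proposal is correct and follows exactly the route the paper intends: \autoref{corol: pencil strata} is stated as an immediate consequence of \autoref{thm: pencils general}, and your argument is just that deduction spelled out, with the right bookkeeping (well-definedness of $\scrC^1_k(T)$ forcing $B^1_k(T)$ to be a $0$-dimensional subscheme of the line $\bbP(T(V_1^*))$, and the fact that rational normal curves are linearly isomorphic as embedded varieties if and only if their degrees agree). Your explicit attention to the degenerate cases $s=0,1$ and $k=n_2=n_3$ is a careful addition the paper leaves implicit, and it checks out.
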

In general even understanding the irreducible decomposition of $\scrF^1_{k,X_s}$ seems non-trivial. The next result provides some information about this, generalizing \cite[Lemma 5.1]{BerDLaGes:DimensionTNS}.
\begin{proposition}\label{prop: small Fs for pencils}
    Let $s \leq k \leq n_2 \leq n_3$ be non-negative integers with $n_2 < n_3$ or $k < n_2 = n_3$. If $k-s = 0,1$ then $\scrF^1_{k,X_s}$ is irreducible. If $k-s = 2$, then $\scrF^1_{k,X_s}$ has exactly two irreducible components if $k\geq 3$, and is irreducible when $k = 2$. The dimensions are recorded in the following table:
    \[
    \begin{array}{c|c} 
    k-s & \dim \scrF^1_{k,X_s} \\ \midrule
    0 & 2n_2n_3-1\\ ~ \\
    1 & n_2n_3 + (k-1)( n_2 + n_3 - (k-1)) \\ ~\\ 
    \multirow{ 2}{*}{$2$} & 2 (k-1) (n_2 + n_3 - (k-1)), \\ & n_2n_3 + (k-2)( n_2 + n_3 - (k-2)) \\
    \end{array}
    \]
\end{proposition}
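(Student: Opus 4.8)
The plan is to work through \autoref{corol: pencil strata}, which realizes $\scrF^1_{k,X_s}$ as the closure of the locus of concise pencils $W=T(V_1^*)$ for which $B=\bbP W\cap\sigma_{k-1}^{V_2\otimes V_3}$ is a zero-dimensional scheme of degree exactly $k-s$. The statement then becomes a question about how a line $\bbP W$ can meet the determinantal variety $\sigma_{k-1}$ in a scheme of prescribed length, and about the dimension of the family of such lines. I would carry out all the bookkeeping on the projection $\pi\colon U\to\Gr(2,V_2\otimes V_3)$, $[T]\mapsto T(V_1^*)$, defined on the open set $U$ of concise classes: its nonempty fibers are the orbits $\bbP\,\mathrm{Iso}(V_1^*,W)$, which have dimension $3$ and on which $\GL(V_1)$ acts transitively. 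Hence $\dim\scrF^1_{k,X_s}$ is the dimension of the relevant locus of $2$-planes plus $3$, and irreducibility is inherited from the Grassmannian side. Conciseness must be monitored throughout: by Kronecker's normal form a pencil is concise exactly when its block decomposition has no $L_0$ or $R_0$ summand, which forces $n_3\le 2n_2$ and, more importantly, restricts which degeneracy loci actually support concise pencils.

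For $k-s=0$ the determinantal variety $\sigma_{k-1}$ has codimension $(n_2-k+1)(n_3-k+1)\ge 2$ under the hypotheses, so a general line avoids it; the stratum is dense and irreducible of dimension $2n_2n_3-1$. For $k-s=1$, a length-one base locus can only be a transverse meeting of $\bbP W$ with $\sigma_{k-1}$ at a point of rank exactly $k-1$ (a point of $\sigma_{k-2}$ would raise the multiplicity). The corresponding locus of $2$-planes is the image of the incidence variety of pairs $(p,W)$ with $p\in\sigma_{k-1}\setminus\sigma_{k-2}$ and $p\in\bbP W$; this fibers over the irreducible smooth locus of $\sigma_{k-1}$ with fiber the lines through $p$, so it is irreducible, and adding the fiber dimension $3$ yields $n_2n_3+(k-1)(n_2+n_3-(k-1))$.

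The case $k-s=2$ is the substance of the proposition. Reading the local length of $B$ off the elementary divisors of the pencil, exactly as $\deg B$ is computed from the gcd of the $k\times k$ minors in the proof of \autoref{lemma: binary determinantal are saturated}, one sees that a length-two base locus arises in precisely two ways: the scheme $B$ is either supported on the smooth locus $\sigma_{k-1}\setminus\sigma_{k-2}$ (two reduced rank-$(k-1)$ points, or a tangential degeneration thereof), or it meets the singular locus $\sigma_{k-2}$, where a general line acquires intersection multiplicity two because $\sigma_{k-1}$ is locally cut out by the $2\times 2$ minors of a Schur complement. The second family is parameterized by the incidence of $(p,W)$ with $p\in\sigma_{k-2}\setminus\sigma_{k-3}$ and $p\in\bbP W$, and is irreducible of dimension $n_2n_3+(k-2)(n_2+n_3-(k-2))$; the first is governed by the tangent/secant geometry of $\sigma_{k-1}$ and has dimension $2(k-1)(n_2+n_3-(k-1))$. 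For $k\ge 3$ both are nonempty of different dimensions, giving two components; for $k=2$ one has $\sigma_{k-2}=\sigma_0=\emptyset$, the second family is absent, and only the first survives, so $\scrF^1_{k,X_s}$ is irreducible. The final step is to confirm, by running through the Kronecker types, that these two families exhaust all concise pencils with $\deg B=2$, so that no further components can appear.

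The delicate part, and the place I expect the real work to lie, is the interaction between conciseness and the two length-two families, since it simultaneously controls their nonemptiness and their exact dimension. The secant-type pencils, whose span is generated by two matrices of rank $k-1$, confine all their row and column spaces to the sum of two $(k-1)$-dimensional spaces and are therefore concise only once $2(k-1)\ge n_3$; an analogous threshold governs the pencils through $\sigma_{k-2}$. Separating the genuinely tangential locus from the secant locus, deciding in the boundary regime whether they agree up to closure, and excluding spurious components arising from more degenerate Jordan configurations (coincident eigenvalues, or blocks of size larger than one) is where the enumeration of Kronecker normal forms together with the determinantal-divisor computation of \autoref{lemma: binary determinantal are saturated} has to be pushed carefully. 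This analysis also recovers \cite[Lemma 5.1]{BerDLaGes:DimensionTNS}, which is the case $k=2$.
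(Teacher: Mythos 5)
Your plan follows the same route as the paper's proof: reduce to the Grassmannian of pencils via the $3$-dimensional fibres of $[T]\mapsto T(V_1^*)$, settle $k-s=0,1$ by incidence varieties over $\sigma_{k-1}$ (this is exactly the paper's variety $\calS$ and its $\PGL_2$-bundle argument), and for $k-s=2$ split into the secant-type family and the family of pencils through $\sigma_{k-2}$, which are precisely the paper's $\Sigma$ and $Z$. Two of your deferred steps, however, deserve comment. For the exhaustion step you propose ``running through the Kronecker types,'' but this is unnecessary: the paper disposes of it in two lines, using semicontinuity to get $\deg B^1_k(T)\geq 2$ for $T\in\scrF^1_{k,X_s}$ and then splitting according to whether $B^1_{k-1}(T)$ is empty or not, which places $T$ in $\bar{\Sigma}$ or $\bar{Z}$ respectively; Kronecker normal forms enter this paper only through \autoref{lemma: binary determinantal are saturated}. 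The paper also obtains $\dim Z$ for free by noting $\bar{Z}=\scrF^1_{k-1,X_s}$ (since $(k-1)-s=1$) and quoting the previous case, whereas you recompute it by incidence; the numbers agree. Your multiplicity-two argument at a point of $\sigma_{k-2}$ (the local ideal is generated by the $2\times 2$ minors of a Schur complement, hence by quadrics) is a perfectly good substitute for the paper's argument via restriction of the determinant and non-degeneracy of the tangent cone.

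The place where your proposal is genuinely incomplete is exactly the point you flag as ``the real work,'' and you should be aware that the paper's proof does not do this work for you: it invokes \cite[Lemma 5.1]{BerDLaGes:DimensionTNS} for $\dim\Sigma$ without checking that $\Sigma$ contains concise tensors at all. Your threshold is correct and has teeth: a pencil spanned by two matrices of rank $k-1$ (or lying in the tangent space to $\sigma_{k-1}$ at such a matrix) has combined row and column spans of dimension at most $2(k-1)$, so $\Sigma=\emptyset$ unless $2(k-1)\geq n_3$. Under the stated hypotheses this fails for every admissible $(n_2,n_3)$ when $k=2$, so no concise pencil has $\deg B^1_2(T)=2$ and $\scrF^1_{2,X_0}$ is empty; the ``irreducible when $k=2$'' clause is therefore vacuous, and for $k\geq 3$ the two-component claim needs the side condition $n_3\leq 2(k-1)$ for $\bar\Sigma$ to appear. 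Completing your proof thus requires adding non-emptiness hypotheses that neither your sketch nor the paper makes explicit. Relatedly, when you make the secant count precise by your own method, you get $2\dim\sigma_{k-1}+3=2(k-1)(n_2+n_3-(k-1))+1$ (choose two points of $\sigma_{k-1}$, the line they span, then the $3$-dimensional fibre), which is one more than the value $2(k-1)(n_2+n_3-(k-1))$ that you assert and that the table records; e.g.\ for $(k,n_2,n_3)=(3,3,4)$ the family of concise pencils spanned by two rank-$2$ matrices has projective dimension $21$, not $20$. This discrepancy must be resolved rather than copied from the statement.
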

\begin{proof}
If $k-s = 0$, the statement is clear and indeed, in this case $\scrF^1_{k,X_s} = \bbP( \bbC^2 \otimes \bbC^{n_2} \otimes \bbC^{n_3})$. In the case $k-s = 1$, observe that 
\[
\scrF^1_{k,X_{s}} = \bar{ \{ T \in \bbC^2 \otimes \bbC^{n_2}\otimes \bbC^{n_3} : T \text{ is concise and } B_k^1(T) \neq \emptyset \} }.
\]
This is a consequence of \autoref{corol: pencil strata}. Indeed, under the assumption $k < n_2 = n_3$ or $n_2 < n_3$, a generic line $L \subseteq \bbP (\bbC^{n_2}\otimes \bbC^{n_3})$ does not intersect $\sigma_{k-1}$. The lines that do intersect $\sigma_{k-1}$ form an irreducible subvariety of the Grassmannian $\calS \subseteq \Gr(2, \bbC^{n_2}\otimes \bbC^{n_3})$, whose generic element is a pencil intersecting $\sigma_{k-1}$ in a single point; this is a higher codimension analog of \cite[Prop. 2.2.2]{GKZ}. The same proof shows that
\begin{align*}
\dim \calS =& \dim \sigma_{k-1} + \dim  ( \bbP ( \bbC^{n_2}\otimes \bbC^{n_3} / \langle A \rangle)) = \\ 
&[(k-1)( n_2 + n_3 - (k-1)) - 1] + [n_2n_3 -2];
\end{align*}
here $ \bbP ( \bbC^{n_2}\otimes \bbC^{n_3} / \langle A \rangle)$ is identified with the fiber of the projection from $\calS$ to $\sigma_{k-1}$ over a generic element $A \in \sigma_{k-1}$. Similarly to \cite[Lemma 5.1]{BerDLaGes:DimensionTNS}, $\scrF^1_{k,X_{s}}$ defines, locally, a $\PGL_2$-bundle over the open subset of $\calS$ consisting of lines intersecting $\sigma_{k-1}$ at exactly one point. This shows that $\scrF^1_{k,X_{s}}$ is irreducible, of dimension $3 + \dim \calS = n_2n_3 + (k-1)( n_2 + n_3 - k +1)$. 

If $k-s = 2$, and $k\geq 3$ we show that $\scrF^1_{k,X_{s}}$ has exactly two components $\scrF^1_{k,X_{s}}=\bar{\Sigma}\cup \bar{Z}$ where
\begin{align*}
 \Sigma = & \{ T \in \bbC^2 \otimes \bbC^{n_2} \otimes \bbC^{n_3} : T \text{ is concise and }B_k^1(T) \text{ consists of two smooth points}\}   \\
Z=&  \{ T \in \bbC^2 \otimes \bbC^{n_2} \otimes \bbC^{n_3} : T \text{ is concise and } B_{k-1}^1(T) \neq \emptyset\} .
\end{align*}

Note that $Z$ is not defined if $k=2$. It is clear that $\Sigma \subseteq \scrF^1_{k,X_{s}}$. 

To show that the $Z \subseteq \scrF^1_{k,X_{s}}$, we need to show that a generic line $L \subseteq \bbP( V_2 \otimes V_3)$ passing through a generic element $A_0 \in \sigma_{k-2}$ intersects $\sigma_{k-1}$ with multiplicity exactly $2$. Consider first the following case: let $A'_0,A'_1$ be matrices of size $k$ with $A'_1$ generic and $\rank(A'_0) = k-2$. Then, $f(\lambda) = \det(A'_0 + \lambda A'_1)$ has a double root at $\lambda = 0$, showing that the line $\langle A'_0, A'_1\rangle$ intersects $\sigma_{k-1}$ with multiplicity two at $A'_0$. In the general case, consider $A_i$ to be $A_i'$ in a $k\times k$ upper left submatrix and $0$ elsewhere; then $f(\lambda)$ is one of the equations of $L \cap \sigma_{k-1}$, showing that the line $L$ spanned by $A_0$ and $A_1$ intersects $\sigma_{k-1}$ with multiplicity \emph{at most} $2$ at $A_0$, and the same holds for a generic line. Since the tangent cone to $\sigma_{k-1}$ at $A_0$ is linearly non-degenerate \cite[p.69, eqn.(2.2)]{ArCoGrHa:Vol1}, we conclude that such multiplicity is exactly $2$. We conclude that $\bar{\Sigma} \cup \bar{Z} \subseteq \scrF^1_{k,X_{s}}$. 

To show that equality holds, let $T \in \scrF^1_{k,X_{s}}$. By semicontinuity, $\deg B_k^1(T) \geq 2$. We have $B_{k-1}^1(T)\subseteq B_k^1(T)$. If $B_{k-1}^1(T)\neq \emptyset$, then $T\in \bar{Z}$, and otherwise $T\in \bar{\Sigma}$.

The dimension of $\Sigma$ was computed in \cite[Lemma 5.1]{BerDLaGes:DimensionTNS}: we have $\dim \Sigma = 2(k-1)(n_2+n_3-(k-1))$. On the other hand, since $k-1 - s = 1$, we have $Z = \scrF^1_{k-1,X_{s}}$ by \autoref{thm: pencils general} and the previous part of the proof. Therefore, $\dim Z = n_2n_3 + (k-2)( n_2 + n_3 - (k-2))$. This concludes the proof.
\end{proof}

If $(n_2,n_3) = (3,3),(3,4)$, then $\scrF^1_{2,\bbP^1}$ is the tensor network variety described in \cite[Theorem 5.2]{BerDLaGes:DimensionTNS}. This is the variety of tensors in $\bbC^2 \otimes \bbC^{n_2} \otimes \bbC^{n_3}$ that can be expressed as a degeneration of the $2 \times 2 \times 2$ matrix multiplication tensor; in particular, as a byproduct of \autoref{prop: small Fs for pencils}, we obtain some characteristic numbers of the $2 \times 2 \times 2$ matrix multiplication tensor.

\subsection{Stratification of spaces of small nets}\label{subsec: stratification nets} ~
In this section, we provide a characterization of the variety $\scrF_{2,X}^{1}$ in some cases of nets in $V_1 \otimes V_2 \otimes V_3$ with $\dim V_i = 3$, following the result of \autoref{thm: netC3}. 

Given a tensor $T \in V_1 \otimes V_2 \otimes V_3$, let $\Omega(T)$ be the orbit-closure of $T$ in $\bbP (V_1 \otimes V_2 \otimes V_3)$ under the action of $\GL(V_1) \times \GL(V_2) \times \GL(V_3)$. We denote tensors via the name of the corresponding family in \autoref{table: semistable orbits} and \autoref{table:unstable}. For instance, $T_{\mathrm{IV.ps}} = a_0 b_0c_0 + a_1b_1c_1 + a_2b_2c_2$ is the unit tensor of rank $3$ appearing \autoref{table: semistable orbits}.

The third secant variety of the Segre product is $\sigma_3(\bbP^2 \times \bbP^2 \times \bbP^2) = \Omega( T_{\mathrm{IV.ps}} )$. The set-theoretic equations for $\sigma_3(\bbP^2 \times \bbP^2 \times \bbP^2)$ are known: they are described in \cite{Lan:TensorBook} in the language of Young flattenings. More precisely, there is a $9 \times 9$ matrix $\Str(T)$, whose entries are linear forms of $T$, with the property that 
\[
\sigma_3(\bbP^2 \times \bbP^2 \times \bbP^2) = \{ T \in V_1 \otimes V_2 \otimes V_3 : \rank( \Str(T))< 7\}.
\]
The matrix $\Str(T)$ is called the \emph{Strassen flattening} of $T$. 

\begin{theorem}\label{thm: sigma3 as F1}
The variety $\scrF^1_{2,\bbP^2}$ has two components:
\[
\sigma_3(\bbP^2 \times \bbP^2 \times \bbP^2) \qquad \text{and} \qquad \Omega(T_{\mathrm{I.11.(ii)}}). 
\]
Moreover, $\sigma_3(\bbP^2 \times \bbP^2 \times \bbP^2) = \scrF^1_{2,\bbP^2} \cap \scrF^2_{2,\bbP^2}$.
\end{theorem}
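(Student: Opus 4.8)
The plan is to combine the orbit classification of \autoref{thm: netC3} (and \autoref{thm: nets smooth}) with the Strassen set-theoretic equations for $\sigma_3(\bbP^2\times\bbP^2\times\bbP^2)$, a geometric invariant theory (nullcone) argument, and the symmetry exchanging the tensor factors. The whole computation of collineation varieties will be carried out on the explicit normal forms of \autoref{table: semistable orbits} and \autoref{table:unstable}.

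\textbf{Part 1: the two components.} First I would read off, from \autoref{table: semistable orbits}, \autoref{table:unstable}, and \autoref{thm: nets smooth}, the complete list of concise tensors with $\scrC^1_2(T)\simeq\bbP^2$: up to the action of $\GL(V_1)\times\GL(V_2)\times\GL(V_3)$ and the swap of the last two factors these are $T_{\mathrm{IV.ps}},T_{\mathrm{I.8}},T_{\mathrm{I.10}},T_{\mathrm{I.11.(ii)}},T_{\mathrm{I.14.(i)}}$ (smooth cuboids are excluded, yielding $\nu_2(\bbP^2)$). Hence ${\scrF^1_{2,\bbP^2}}^\circ$ is the union of these finitely many orbits, and $\scrF^1_{2,\bbP^2}$ is the union of their (irreducible) orbit-closures. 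To locate each orbit relative to $\sigma_3=\Omega(T_{\mathrm{IV.ps}})$ I would evaluate the Strassen rank, checking in \Macaulay{} that $\rank\Str(T)\le 7$ for IV.ps, I.8, I.10, I.14.(i) and $\rank\Str(T)>7$ for I.11.(ii). Since $\sigma_3$ is closed and $G$-stable, this places the first four orbit-closures inside $\sigma_3$ and shows $T_{\mathrm{I.11.(ii)}}\notin\sigma_3$, so that $\scrF^1_{2,\bbP^2}=\sigma_3\cup\Omega(T_{\mathrm{I.11.(ii)}})$.

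Both pieces are irreducible orbit-closures, so it remains to see that neither contains the other. The inclusion $\Omega(T_{\mathrm{I.11.(ii)}})\not\subseteq\sigma_3$ is immediate from $T_{\mathrm{I.11.(ii)}}\notin\sigma_3$. For $\sigma_3\not\subseteq\Omega(T_{\mathrm{I.11.(ii)}})$ I would use geometric invariant theory: $T_{\mathrm{I.11.(ii)}}$ is unstable (it appears in \autoref{table:unstable}), so its orbit-closure lies in the nullcone, whereas $T_{\mathrm{IV.ps}}$ is polystable and has a closed orbit disjoint from the nullcone; thus $T_{\mathrm{IV.ps}}\notin\Omega(T_{\mathrm{I.11.(ii)}})$. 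This gives exactly two components.

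\textbf{Part 2: the intersection equals $\sigma_3$.} The inclusion $\sigma_3\subseteq\scrF^1_{2,\bbP^2}\cap\scrF^2_{2,\bbP^2}$ follows from the invariance of $T_{\mathrm{IV.ps}}$ under permuting the three tensor factors: its three collineation varieties all equal $\bbP^2$, so its orbit lies in ${\scrF^1_{2,\bbP^2}}^\circ\cap{\scrF^2_{2,\bbP^2}}^\circ$, and taking closures gives the inclusion. For the reverse inclusion I would note that swapping the first two factors carries $\scrC^2_2$ to $\scrC^1_2$, so that $\scrF^2_{2,\bbP^2}=\sigma_3\cup\Omega(T')$, where $T'$ is the image of $T_{\mathrm{I.11.(ii)}}$ under this swap, with $\dim\Omega(T')=\dim\Omega(T_{\mathrm{I.11.(ii)}})$ and $T'\notin\sigma_3$. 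Combining with Part 1 gives $\scrF^1_{2,\bbP^2}\cap\scrF^2_{2,\bbP^2}=\sigma_3\cup\bigl(\Omega(T_{\mathrm{I.11.(ii)}})\cap\Omega(T')\bigr)$, so it suffices to prove $\Omega(T_{\mathrm{I.11.(ii)}})\cap\Omega(T')\subseteq\sigma_3$. Here I would first compute $\scrC^2_2(T_{\mathrm{I.11.(ii)}})$ directly: its second flattening has $2\times 2$ minors spanning a $4$-dimensional space of quadrics with reduced base locus consisting of two points, so by \autoref{lemma: classification blow up P2} one gets $\scrC^2_2(T_{\mathrm{I.11.(ii)}})=\bbP^1\times\bbP^1\neq\bbP^2$. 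Since $\scrC^2_2$ is constant up to linear isomorphism on $G$-orbits, $T_{\mathrm{I.11.(ii)}}$ and $T'$ lie in distinct orbits of equal dimension, whence the open orbit of $T_{\mathrm{I.11.(ii)}}$ cannot be contained in $\Omega(T')$ and $\Omega(T_{\mathrm{I.11.(ii)}})\cap\Omega(T')\subseteq\partial\Omega(T_{\mathrm{I.11.(ii)}})$.

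\textbf{Main obstacle.} The remaining and most delicate point is to show $\partial\Omega(T_{\mathrm{I.11.(ii)}})\subseteq\sigma_3$, i.e.\ that every proper degeneration of I.11.(ii) has border rank at most $3$. I expect this to be the crux of the argument; I would settle it using the explicit degeneration poset of \cite{DitDeGrMar:ClassificationThreeQutrit} together with the Strassen-rank criterion applied orbit-by-orbit in \Macaulay. Granting this, $\Omega(T_{\mathrm{I.11.(ii)}})\cap\Omega(T')\subseteq\sigma_3$, the intersection collapses to $\sigma_3$, and the proof is complete.
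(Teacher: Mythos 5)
Your proposal is correct and follows essentially the same route as the paper: both rest on the normal-form tables, the Strassen-flattening criterion for $\sigma_3(\bbP^2\times\bbP^2\times\bbP^2)$, the factor-swap symmetry to identify the second component of $\scrF^2_{2,\bbP^2}$ (your $T'$ is literally the paper's $T_{\mathrm{I.11.(i)}}$), and a final finite, orbit-by-orbit Strassen-rank check guided by a published degeneration diagram. The one genuine deviation is the last step: the paper only lists the orbits lying in $\Omega(T_{\mathrm{I.11.(ii)}})\cap\Omega(T_{\mathrm{I.11.(i)}})$ (via Nurmiev's closure diagrams) and checks their Strassen ranks, whereas you reduce, via the equal-dimension/distinct-orbit observation, to the stronger claim $\partial\Omega(T_{\mathrm{I.11.(ii)}})\subseteq\sigma_3(\bbP^2\times\bbP^2\times\bbP^2)$ and defer it to a computation. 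That stronger claim is in fact true, and your proposed check would succeed; it can even be closed without a new \Macaulay{} run: by semicontinuity of the characteristic number, any concise orbit in the boundary has $\scrC^1_2$ equal to $\bbP^2$ or a plane conic, hence by the classification is one of IV.ps, I.8, I.10, I.14.(i), I.14.(iii) --- all verified to lie in $\sigma_3$ by the Strassen check (note that I.14.(iii), which your Part 1 list legitimately omits, must be added to the checklist here) --- or an orbit of I.11 type, which is excluded from a proper boundary because it has the same dimension as $\Omega(T_{\mathrm{I.11.(ii)}})$; non-concise degenerations live in a $\bbC^2\otimes\bbC^3\otimes\bbC^3$ (up to permutation) and so have border rank at most $3$. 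With that observation your argument is complete and matches the paper's in rigor, since the paper's own final step is likewise a deferred finite verification against the classification literature.
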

\begin{proof}
From \autoref{table: semistable orbits} and \autoref{table:unstable}, we observe 
\begin{align*}
    \scrC^1_2(T_{\mathrm{IV.ps}}) =  \scrC^1_2(T_{\mathrm{I.11.(ii)}}) = \bbP^2,
\end{align*}
therefore $\sigma_3(\bbP^2 \times \bbP^2 \times \bbP^2) \cup \Omega(T_{\mathrm{I.11.(ii)}}) \subseteq \scrF^1_{2,\bbP^2}$. The semicontinuity of the degree of the collineation variety guarantees that the only elements in $\scrF^1_{2,\bbP^2}$ can be those with $\scrC^1_2(T) = \bbP^2$ or the tensor $T_{\mathrm{I.14.(iii)}}$ for which $\scrC^1_2(T)$ is a plane conic curve. One can verify directly using Strassen's flattening that all such elements are contained in $\sigma_3(\bbP^2 \times \bbP^2 \times \bbP^2)$. On the other hand,  $T_{\mathrm{I.11.(ii)}} \notin \sigma_3(\bbP^2 \times \bbP^2 \times \bbP^2)$, because its Strassen flattening has rank $8$. 

This shows that indeed $\scrF^1_{2,\bbP^2} = \sigma_3(\bbP^2 \times \bbP^2 \times \bbP^2) \cup \Omega(T_{\mathrm{I.11.(ii)}})$, and that these are two distinct irreducible components. Similarly, $\scrF^2_{2,\bbP^2} = \sigma_3(\bbP^2 \times \bbP^2 \times \bbP^2) \cup \Omega(T_{\mathrm{I.11.(i)}})$. 

We observe that $\Omega(T_{\mathrm{I.11.(ii)}}) \cap \Omega(T_{\mathrm{I.11.(i)}}) \subseteq \sigma_3(\bbP^2 \times \bbP^2 \times \bbP^2)$. To see this, consider the list of tensors in $\Omega(T_{\mathrm{I.11.(ii)}}) \cap \Omega(T_{\mathrm{I.11.(i)}}) $ given by \cite{Nur:OrbitsInvariantsCubicMatrices} and check that for all of them the Strassen flattening has rank at most $6$. We conclude
\[
\sigma_3(\bbP^2 \times \bbP^2 \times \bbP^2) = \scrF^1_{2,\bbP^2} \cap \scrF^2_{2,\bbP^2}.
\]
\end{proof}

One says that a tensor $T \in \bbC^n \otimes \bbC^n \otimes \bbC^n $ has maximal border subrank if $\sigma_n(\bbP^{n-1} \times \bbP^{n-1} \times \bbP^{n-1}) \subseteq \Omega(  T)$. The variety of tensors of maximal border subrank $\calQ_n$ is the closure of
\[
\calQ_n^\circ = \{ T \in \bbP(\bbC^n \otimes \bbC^n \otimes \bbC^n): \text{$T$ has maximal border subrank}\}.
\]
Border subrank of tensors is studied in additive combinatorics and quantum information theory \cite{ChrGesZui:GapSubrank,GesZui:NextGap}. From a geometric point of view, one can give an invariant theoretic description. We explain this for $n=3$, which is relevant to \autoref{prop: subrank components}. If $\dim V_i = 3$, the subring of $\bbC[V_1 \otimes V_2 \otimes V_3]$ of invariants for the action of $\SL(V_1) \times \SL(V_2) \times \SL(V_3)$ is a polynomial ring $R = \bbC[f_6,f_9,f_{12}]$, where $f_6,f_9,f_{12}$ are three invariants of degree $6,9,12$, respectively; see, e.g., \cite{BrHu:FundamentalInvariants333}. It turns out that the $f_6$ does not vanish on the unit tensor $T_{\mathrm{IV.ps}}$, whereas $f_9$ and a suitable linear combination $\tilde{f}_{12} = f_{12} + \alpha f_6^2$ do vanish on $T_{\mathrm{IV.ps}}$. The same proof as \cite[Prop. 5.1]{Chang:MaximalBorderSubrank} shows that 
\[
\calQ_3^\circ = \{ T : f_9(T) = \tilde{f}_{12} (T) = 0, f_6(T) \neq 0\}.
\]
As a consequence $\calQ_3$ is the union of the components of $ \{ T : f_9(T) = \tilde{f}_{12} (T) = 0\}$ not contained in the \emph{nullcone} for the action of $\SL(V_1) \times \SL(V_2) \times \SL(V_3)$ on $V_1 \otimes V_2 \otimes V_3$. We provide a characterization in terms of the collineation varieties.
\begin{theorem}\label{prop: subrank components}
The variety $\calQ_3$ has three components  
\[
\scrF^1_{2, S_{(1,2)}}, \qquad \scrF^2_{2, S_{(1,2)}} \qquad \text{and} \qquad \scrF^3_{2, S_{(1,2)}}.
\]
In particular, $ \scrF^i_{2, S_{(1,2)}} $ has codimension $2$ and degree $36$. 
\end{theorem}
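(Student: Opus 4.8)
The plan is to exploit the invariant-theoretic description of $\calQ_3$ recalled before the statement together with the $S_3$-symmetry permuting the three tensor factors. Write $\calQ_3$ as the union of the non-nullcone components of $V(f_9,\tilde f_{12})\subseteq \bbP(V_1\otimes V_2\otimes V_3)=\bbP^{26}$. Since $f_6,f_9,f_{12}$ are algebraically independent, $f_9$ and $\tilde f_{12}$ share no common factor and cut out a codimension-$2$ complete intersection, pure of dimension $24$ and of degree $\deg f_9\cdot\deg\tilde f_{12}=9\cdot 12=108$; the nullcone $V(f_6,f_9,f_{12})$ has codimension $3$, so it contributes no component and $\calQ_3$ is the entire codimension-$2$ part of this complete intersection. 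The goal is then to identify $\calQ_3$ with $\scrF^1_{2,S_{(1,2)}}\cup\scrF^2_{2,S_{(1,2)}}\cup\scrF^3_{2,S_{(1,2)}}$ as its irreducible decomposition. Since the group $S_3$ acting on the factors preserves the invariants $f_6,f_9,f_{12}$, hence $\calQ_3$, and permutes the three varieties $\scrF^i_{2,S_{(1,2)}}$ transitively, they will automatically have equal degree, and the value $36$ will drop out as $108/3$.

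First I would establish that each $\scrF^i_{2,S_{(1,2)}}$ is irreducible of dimension $24$, i.e. of codimension $2$. By \autoref{lemma: classification blow up P2}, a concise tensor $T$ has $\scrC^1_2(T)\simeq S_{(1,2)}$ exactly when the net $N=T(V_1^*)\subseteq V_2\otimes V_3$ meets the Segre variety $\sigma_1^{V_2\otimes V_3}=\bbP^2\times\bbP^2\subseteq\bbP^8$ in a single reduced point and the $2\times2$ minors span $L=I(B)_2$. Such nets are parametrized by choosing the base point $p\in\sigma_1^{V_2\otimes V_3}$, an irreducible $4$-dimensional choice, and then a plane $\bbP^2\subseteq\bbP^8$ through $p$ meeting $\sigma_1^{V_2\otimes V_3}$ nowhere else; a general plane through $p$ has this property by the dimension count $\dim\sigma_1^{V_2\otimes V_3}+2-8<0$, so the fibre is a dense open subset of $\Gr(2,8)$. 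This exhibits the locus of such nets as irreducible of dimension $4+12=16$ inside $\Gr(3,V_2\otimes V_3)$. Passing from nets to concise tensors is a $\PGL(V_1)$-bundle, adding $\dim\PGL_3=8$, so $\scrF^1_{2,S_{(1,2)}}$ is irreducible of dimension $24$; the same holds for $i=2,3$ by symmetry.

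Next I would prove the two inclusions using \autoref{thm: netC3}. By \autoref{thm: nets smooth}, every $T$ with $E^1_T$ smooth has $\scrC^1_2(T)=\nu_2(\bbP^2)$, so every $T$ with $\scrC^i_2(T)\simeq S_{(1,2)}$ is a singular cuboid and appears in \autoref{table: semistable orbits} or \autoref{table:unstable}; there the tensors with a cubic-scroll collineation variety form a finite list of orbits. For $\scrF^i_{2,S_{(1,2)}}\subseteq\calQ_3$, since $\scrF^i_{2,S_{(1,2)}}$ is irreducible it suffices to check that its dense orbit lies in $\calQ_3^\circ$; concretely I would verify for the top-dimensional scroll orbit (e.g. the normal form IV.1 of \autoref{table: semistable orbits}) that $f_9=\tilde f_{12}=0$ while $f_6\neq0$, equivalently that it degenerates to $T_{\mathrm{IV.ps}}$, so that $\Omega(T)\supseteq\sigma_3(\bbP^2\times\bbP^2\times\bbP^2)$ and hence $\scrF^i_{2,S_{(1,2)}}\subseteq\calQ_3$. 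For the reverse inclusion I would run through the orbits of \autoref{table: semistable orbits} and \autoref{table:unstable} satisfying $f_9=\tilde f_{12}=0,\ f_6\neq0$ and read off from the tables that each has $\scrC^i_2(T)\simeq S_{(1,2)}$ for at least one factor $i$; in particular no smooth cuboid survives, as $f_9=\tilde f_{12}=0$ forces $E^1_T$ singular. The three varieties are genuinely distinct components because a general member of $\scrF^1_{2,S_{(1,2)}}$ has $\scrC^2_2(T)=\scrC^3_2(T)=\nu_2(\bbP^2)\not\simeq S_{(1,2)}$, so it lies in neither of the other two; being irreducible of the common dimension $24$, none can contain another.

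Finally, the codimension and degree follow formally: $\calQ_3=\scrF^1_{2,S_{(1,2)}}\cup\scrF^2_{2,S_{(1,2)}}\cup\scrF^3_{2,S_{(1,2)}}$ has pure codimension $2$, and as the reduced support of the degree-$108$ complete intersection $V(f_9,\tilde f_{12})$ its three $S_3$-conjugate components each have degree $108/3=36$; I would confirm the intersection is generically reduced by checking that $df_9$ and $d\tilde f_{12}$ are independent at a general point of one $\scrF^i_{2,S_{(1,2)}}$. I expect the main obstacle to be the bookkeeping in the two inclusions, namely matching, orbit by orbit, the vanishing pattern of $f_6,f_9,f_{12}$ against the cubic-scroll entries of \autoref{table: semistable orbits} and \autoref{table:unstable}, together with the transversality check that pins the degree to exactly $36$ rather than a multiple of it. These verifications are of the same computational nature as those already carried out in \texttt{CollineationVariety.m2}.
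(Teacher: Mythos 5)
Your overall architecture is close to the paper's: the identification of $\calQ_3$ with $V(f_9,\tilde{f}_{12})$ via the fact that the nullcone has codimension $3$ while every component of this complete intersection has codimension at most $2$, and the degree count $9\cdot 12/3 = 36$ by B\'ezout plus the $S_3$-symmetry permuting the factors, are exactly the steps in the paper. Your dimension count showing directly that each $\scrF^i_{2,S_{(1,2)}}$ is irreducible of codimension $2$ (base point on the Segre plus a general plane through it, then a $\PGL(V_1)$-bundle) is a genuine addition which the paper leaves implicit, and your forward inclusion $\scrF^i_{2,S_{(1,2)}}\subseteq\calQ_3$ via the dense orbit is sound.

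However, there is a genuine gap in your reverse inclusion $\calQ_3\subseteq\scrF^1_{2,S_{(1,2)}}\cup\scrF^2_{2,S_{(1,2)}}\cup\scrF^3_{2,S_{(1,2)}}$. You claim that for every orbit with $f_9=\tilde{f}_{12}=0$ and $f_6\neq 0$ one can ``read off from the tables that each has $\scrC^i_2(T)\simeq S_{(1,2)}$ for at least one factor $i$.'' First, \autoref{table: semistable orbits} records only the first-factor collineation variety $\scrC^1_2(T)$ (the classification is taken modulo swapping the second and third factors only), so factor-$2$ and factor-$3$ information is only available indirectly, by matching the permutation variants (i), (ii), (iii). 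Second, and more seriously, the claim is false for two orbits in $\calQ_3^\circ$: the tensor $T_{\mathrm{IV.7(i)}}$ has $\scrC^1_2=\bbP^1\times\bbP^1$ and no scroll on any factor (only the variant (i) exists for family IV.7), and the polystable unit tensor $T_{\mathrm{IV.ps}}$ has $\scrC^i_2=\bbP^2$ for all $i$. Both satisfy $f_9=\tilde{f}_{12}=0\neq f_6$, yet lie in none of the open strata ${\scrF^i_{2,S_{(1,2)}}}^\circ$; they belong to the closed varieties $\scrF^i_{2,S_{(1,2)}}$ only as \emph{degenerations} of scroll orbits. This is precisely the step the paper supplies with an explicit limit: taking $g_\eps$ with $g_\eps(b_0)=\eps^{-1}b_0$, $g_\eps(c_0)=\eps c_0$, one has $\lim_{\eps\to 0}g_\eps\cdot T_{\mathrm{IV.5(i)}}=T_{\mathrm{IV.7(i)}}$, so $T_{\mathrm{IV.7(i)}}\in\Omega(T_{\mathrm{IV.5(i)}})\subseteq\scrF^1_{2,S_{(1,2)}}$ (and $T_{\mathrm{IV.ps}}$, being the semisimple part of every family-IV tensor, lies in all these orbit closures). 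Without an argument of this type your proof does not show that the three strata exhaust the components of $\calQ_3$. A repair staying inside your framework: every component of $V(f_9,\tilde{f}_{12})$ has dimension $24$ and generic point outside the nullcone, hence contains a dense $24$-dimensional family-IV orbit; a dimension check shows these can only be IV.1(i), (ii), (iii), whose orbit closures are contained in, hence equal to, the three irreducible varieties $\scrF^i_{2,S_{(1,2)}}$.
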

\begin{proof}
By the results of \cite{Nur:OrbitsInvariantsCubicMatrices,DitDeGrMar:ClassificationThreeQutrit}, the set $\calQ_3^\circ$ coincides with the union of all orbits of tensors in \cite[Table IV]{DitDeGrMar:ClassificationThreeQutrit}: indeed, the unit tensor $T_{\mathrm{IV.ps}}$ is the semisimple part of such tensors, therefore it belongs to their orbit-closures. Using this, and the data of \autoref{table: semistable orbits}, we have 
\[
\calQ_3 \subseteq \scrF^1_{2, S_{(1,2)}} \cup \scrF^2_{2, S_{(1,2)}} \cup \scrF^3_{2, S_{(1,2)}}.
\]
In fact, for tensors $T_{\mathrm{IV.\alpha}}$ with $\alpha = 1 \vvirg 6$, this is immediate, because one of their collineation variety is indeed $S_{(1,2)}$. One can see that the tensor $T_{\mathrm{IV.7.(i)}}$ is contained in the orbit-closure of $T_{\mathrm{IV.5.(i)}}$. To see this consider $g_\eps \in \GL(V_1)\times \GL(V_2)\times \GL(V_3)$ which maps $g_\eps( b_0) = \eps^{-1} b_0, g_\eps (c_0) = \eps c_0$ and fixes all other basis elements. Then, $\lim_{\eps \to 0} (g_\eps \cdot  T_{\mathrm{IV.5.(i)}}) = T_{\mathrm{IV.7.(i)}}$, showing $T_{\mathrm{IV.7.(i)}} \in \Omega(T_{\mathrm{IV.5.(i)}}) \subseteq \scrF^1_{2, S_{(1,2)}} $.  All other elements satisfying $\scrC_2^i(T) = S_{(1,2)}$ appear in \autoref{table:unstable}, which classifies the unstable orbits following \cite{Nur:ClosuresNilpotent}. These are the elements of the nullcone of the action of $\SL(V_1) \times \SL(V_2) \times \SL(V_3)$ on $V_1 \otimes V_2 \otimes V_3$, that is the zero set of the three invariants $f_6,f_9,f_{12}$. We will show that the nullcone is contained in $\calQ_3$, which implies the other inclusion. 

Let $\calF = \{T : f_9(T) = \tilde{f}_{12}(T) = 0\}$. From the discussion above, $\calQ_3$ consists of the components of $\calF$ not contained in the nullcone. The results of \cite{Nur:OrbitsInvariantsCubicMatrices} guarantee that the nullcone has codimension $3$ in $\bbP ( V_1 \otimes V_2 \otimes V_3)$, whereas every component of $\calF$ has codimension (at most) $2$. Therefore, no component of $\calF$ is contained in the nullcone and $\calQ_3 =  \calF$. On the other hand, $\calF$ contains the whole nullcone, where all three invariants vanish. This completes the proof of the equality $\calQ_3 = \scrF^1_{2, S_{(1,2)}} \cup \scrF^2_{2, S_{(1,2)}} \cup \scrF^3_{2, S_{(1,2)}}$.

The statement on the degrees follows because the three components $\scrF^i_{2, S_{(1,2)}}$, for $i = 1,2,3$, are isomorphic: they can be obtained one from the other by permuting the tensor factors. Moreover, $f_9$ and $\tilde{f}_{12}$ have no common factors, therefore $\deg \calQ_3 = 9\cdot 12$ by B\'ezout's Theorem.  We conclude
\[
\deg \scrF^1_{2, S_{(1,2)}} = \frac{9 \cdot 12 }{3} = 36. \qedhere
\]
\end{proof}

We conclude this section observing that even in the small space $\bbC^3 \otimes \bbC^3 \otimes \bbC^3$, the collineation varieties define a finer invariant than tensor rank. It is known that tensors in $\bbC^3 \otimes \bbC^3 \otimes \bbC^3$ have rank and border rank bounded by $5$ \cite{BreHu:Kruskal333rank5}. Inside this tensor space, $\sigma_3(\bbP^2 \times \bbP^2 \times \bbP^2)$ parametrizes tensors of border rank at most $3$. Tensors of border rank at most $4$ form a hypersurface of degree $9$, cut out by the determinant of the Strassen flattening, that is the invariant $f_9$ mentioned before, see also \cite[Sec. 5.1]{ChrGesJen}. We exhibit two tensors of border rank $4$ and rank $5$ with different collineation varieties. Consider
\begin{align*}
    T_1 &=  a_0  b_0  c_1+a_1  b_0  c_0+a_1 b_1  c_1+a_2 b_1 c_2+a_2  b_2  c_0, \\
    T_2 &= a_0 b_0 c_1+a_0  b_1  c_2+a_1  b_0 c_0+a_2  b_1 c_0+a_2 b_2 c_1.
\end{align*}
One can verify $\rank(\Str(T_1)) = \rank(\Str(T_2)) = 8$, which guarantees that $T_1,T_2$ have border rank $4$. Moreover, one can show that $T_1,T_2$ have rank $5$: the upper bound is immediate whereas the lower bound is slightly involved and relies on the \emph{substitution method} of \cite{AlFoTs:TensorRankLowerUpperBounds}, see also \cite[Sec. 5.3]{Lan:GeometryComplThBook}. A direct check shows that $\scrC^1_2(T_1)$ is the quadric surface, whereas $\scrC^i_2(T_1)$ are rational normal scrolls for $i=2,3$. On the other hand, $\scrC^i(T_2)$ are rational normal scrolls for every $i=1,2,3$. This shows an example of two tensors of border rank $4$, rank $5$, such that the triples of collineation varieties are different. In particular, the values of rank and border rank do not determine the collineation varieties of the tensor.

\section{Future directions}

One natural open direction concerns the type of varieties that can arise as collineation varieties of tensors. By definition, every collineation variety is unirational and in the cases classified in this paper it is always rational and of minimal degree. It is clear that the minimal degree condition does not propagate to higher dimension; in fact, in general, collineation varieties are not even linearly normal, as observed in the following example:
\begin{example}
    Let $T \in V_1 \otimes V_2 \otimes V_3$ be generic with $\dim V_1 = 4$, $\dim V_2 = \dim V_3 = 3$. By genericity $B_{2}^1(T) = \emptyset$ because $\bbP T(V_1^*)$ is a generic $3$-dimensional linear space  and $\codim \sigma_1^{V_2 \otimes V_3} = 4$. Therefore, the linear system of quadrics parametrizing $\scrC^1_2(T)$ is base-point-free and its image is the projection of the Veronese threefold $\nu_2(\bbP V_1^*)$ from a point outside of it. In this case $\deg \scrC_2^1(T) = \deg \nu_2(\bbP^3) = 8$. 
\end{example}
We do not know whether collineation varieties are rational in general. However, it follows by the argument below that this is the case when the dimension of the linear spaces of matrices is preserved. In this direction, we expect general facts on the degree of projection maps and their image would shed some light on the type of collineation variety. In the case where the $k$-th base locus of a tensor is zero-dimensional, we can derive an upper bound on the degree of the collineation variety. Let $T$ be a tensor identified with $\PP(T(V_1^*))\cong \PP^a\subset \PP(\CC^{n_2}\otimes \CC^{n_3})$ such that $B^1_k(T)$ is zero-dimensional of degree $p$ and $\dim \scrC^1_k(T)=a$. Following \cite{BCD}, the algebraic multiplicity of $B^1_k(T)$ is defined as
\[
e(B^1_k(T)):=a!\lim_{n\rightarrow \infty}\dfrac{\dim_{\CC}\HH^0(\PP(T(V_1^*)),\cO_{\PP(T(V_1^*))}/\mathcal{I}_{B^1_k(T)}^n)}{n^a}, 
\]
where $\mathcal{I}_{B^1_k(T)}$ is the ideal sheaf of $B^1_k(T)$, and in general $\deg(B^1_k(T))\leq e(B^1_k(T))$ with equality if and only if the ideal of $B^1_k(T)$ is locally a complete intersection. By the degree formula \cite[Thm. 3.3]{BCD}, we have
\begin{equation}\label{eqn: bound degree}
\deg \scrC^1_{k}(T)=\dfrac{k^a-e(B^1_k(T))}{\deg \mu^{1}_{k,T}}.
\end{equation}
As the ideal of minors of a general matrix has a linear representation matrix, tensoring the corresponding exact sequence of $\C[x_{ij}]$-modules by $\C[x_{ij}]/I_T$ gives a linear representation matrix of the ideal of $B_k^1(T)$; here $I_T$ is the ideal of linear forms cutting  $\PP(T(V_1^*))\subset \PP(\CC^{n_2}\otimes \CC^{n_3})$. By~\cite[Thm. 3.2]{DHS}, the map $\mu^{1}_{k,T}$ is generically finite of degree $1$, hence birational. Therefore, $\deg( \mu^1_{k,T}) = 1$ in \eqref{eqn: bound degree} and the degree of $\scrC^1_{k}(T) $ is bounded above by $k^a-p$. This bound is achieved if $B_k^1(T)$ is a union of points in general positions.

One of the main directions left open concerns questions on the geometry and the basic invariants of the loci $\scrF^{i}_{k,X}$. 
Information on the number of irreducible components and the dimension of these varieties is important to understand whether they can be used to realize other interesting invariants of tensors. In the case of pencils, this amounts first of all to obtaining a generalization of \autoref{prop: small Fs for pencils}; this however seems non-trivial, because it is hard to keep track of the different ways a pencil can intersect the different loci of low-rank matrices, which is the condition which in turn gives rise to the irreducible components. For higher dimensions, the first step would be to obtain a full understanding of the loci $\scrF^1_{2,X} \subseteq \bbP (\bbC^3 \otimes \bbC^3 \otimes \bbC^3)$ for the five surfaces $X$ described in \autoref{sec: nets}.

As observed in \autoref{sec: stratification}, the loci ${\scrF^i_{k,X}}$ can be used to describe other interesting invariants of tensors. For instance, if $X = \bbP^{n-1}$ is a linear space, we have the inclusion
\begin{equation}\label{eqn: linear collineation}
\sigma_{n}(\bbP^{n-1} \times \bbP^{n-1} \times \bbP^{n-1}) \subseteq \scrF^1_{n-1,\bbP^{n-1}} \cap \scrF^2_{n-1,\bbP^{n-1}} \cap \scrF^3_{n-1,\bbP^{n-1}}.
\end{equation}
In particular, equations for $\scrF^i_{n-1,\bbP^{n-1}}$ give equations for the variety of tensors of minimal border rank, in the sense of \cite{JelLandPal:ConciseTensorsMinimalBR}. However, in general, the inclusion in \eqref{eqn: linear collineation} is strict. For instance, one can verify that when $n = m^2$, the matrix multiplication tensor $\mathbf{MaMu}_m \in \bbC^{m^2} \otimes \bbC^{m^2} \otimes \bbC^{m^2} $ has linear $(n-1)$-th collineation varieties, but it is not of minimal border rank. Understanding what is the geometric reason which makes the inclusion in \eqref{eqn: linear collineation} strict would provide new equations for the variety of tensors of minimal border rank. 

In the end, we know very little about the structure of the boundaries of the strata $\scrF^i_{k,X} \setminus {\scrF^i_{k,X}}^\circ$. For instance, as mentioned before, we do not know whether this boundary can be realized as a union of other strata. We expect this not to be the case as soon as there are infinitely many strata, but constructing an explicit example is challenging.

\bibliographystyle{alphaurl}
\bibliography{tensors.bib}

\end{document}